\newtheorem{theorem}{Theorem}[section]
\newtheorem{lemma}[theorem]{Lemma}
\newtheorem{corollary}[theorem]{Corollary}
\newtheorem{proposition}[theorem]{Proposition}
\theoremstyle{definition}
\newtheorem{definition}[theorem]{Definition}
\theoremstyle{example}
\newtheorem{example}[theorem]{Example}
\theoremstyle{remark}
\newtheorem{remark}[theorem]{Remark}
\theoremstyle{clm}
\DeclareMathOperator{\qf}{qf}
\DeclareMathOperator{\Hom}{Hom}
\DeclareMathOperator{\Spec}{Spec}
\DeclareMathOperator{\Max}{Max}
\DeclareMathOperator{\wdim}{w.gl.dim}
\DeclareMathOperator{\Ann}{Ann}
\DeclareMathOperator{\Nil}{Nil}
\DeclareMathOperator{\Ze}{Z}
\DeclareMathOperator{\End}{End}
\DeclareMathOperator{\Gen}{Gen}
\newcommand{\field}[1]{\mathbb{#1}}
\newcommand{\Z}{\field{Z}}
\newcommand{\F}{\field{F}}
\newcommand{\K}{\field{K}}
\newcommand{\M}{\field{M}}
\def\1{{\rm (1)}}
\def\2{{\rm (2)}}
\def\3{{\rm (3)}}
\def\4{{\rm (4)}}
\def\5{{\rm (5)}}
\begin{document}
\title[Commutative rings in which every finitely generated ideal is quasi-projective]{Commutative rings in which every finitely generated ideal is quasi-projective}

\author{J. Abuhlail}
\address{Department of Mathematics and Statistics, King Fahd University of Petroleum \& Minerals, P. O. Box 5046, Dhahran 31261, KSA}
\email{abuhlail@kfupm.edu.sa}

\author{M. Jarrar}
\address{Department of Mathematics and Statistics, King Fahd University of Petroleum \& Minerals, P. O. Box 5046, Dhahran 31261, KSA}
\email{mojarrar@kfupm.edu.sa}

\author{S. Kabbaj}
\address{Department of Mathematics and Statistics, King Fahd University of Petroleum \& Minerals, P. O. Box 5046, Dhahran 31261, KSA}
\email{kabbaj@kfupm.edu.sa}
\thanks{The 1$^\mathrm{st}$ and 2$^\mathrm{nd}$ authors were supported by KFUPM under Research Grant \# MS/Rings/351.}

\date{\today}
\subjclass[2000]{13F05, 13B05, 13C13, 16D40, 16B50, 16D90}
\keywords{Pr\"ufer domain, arithmetical ring, Gaussian ring, Pr\"ufer ring, semihereditary ring, $\star$-module, quasi-projective 
module, weak global dimension}

\begin{abstract}
This paper studies the multiplicative ideal structure of commutative rings in which every finitely generated ideal is quasi-projective. Section 2 provides some preliminaries on quasi-projective modules over commutative rings. Section 3 investigates the correlation with well-known Pr\"ufer conditions; namely, we prove that this class of rings stands strictly between the two classes of arithmetical rings and Gaussian rings. Thereby, we generalize Osofsky's theorem on the weak global dimension of arithmetical rings and partially resolve Bazzoni-Glaz's related conjecture on Gaussian rings. We also establish an analogue of Bazzoni-Glaz results on the transfer of Pr\"ufer conditions between a ring and its total ring of quotients. Section 4 examines various contexts of trivial ring extensions in order to build new and original examples of rings where all finitely generated ideals are subject to quasi-projectivity, marking their distinction from related classes of Pr\"ufer rings.
\end{abstract}
\maketitle

\section{Introduction}

All rings considered in this paper, unless otherwise specified, are commutative with identity element and all modules are
unital. There are five well-known extensions of the notion of Pr\"ufer domain \cite{K,P} to arbitrary rings (i.e., with zero divisors). Namely, for a ring $R$,
\1  $R$ is semihereditary, i.e., every finitely generated ideal of $R$ is projective \cite{CE};
\2  $R$ has weak global dimension $\leq1$ \cite{G1,G2};
\3 $R$ is arithmetical, i.e., every finitely generated ideal of $R$ is locally principal \cite{Fu,J};
\4 $R$ is Gaussian, i.e., $c(fg)=c(f)c(g)$ for any polynomials $f,g$ with coefficients in $R$, where $c(f)$ denotes the content of $f$ \cite{T};
\5 $R$ is Pr\"ufer, i.e., every finitely generated regular ideal of $R$ is projective \cite{BS,Gr}.

In the domain context, all these forms coincide with the original definition of a Pr\"ufer domain \cite{G3}, that is, every non-zero finitely generated ideal is invertible \cite{P}. Pr\"ufer domains occur naturally in several areas of commutative algebra, including valuation theory, star and semistar operations, dimension theory, representations of overrings, trace properties, in addition to several homological extensions.

In 1970 Koehler \cite{Koe} studied associative rings for which every cyclic module is quasi-projective. She noticed that any commutative ring satisfies this property. Later, rings in which every left ideal is quasi-projective were studied by Jain and others \cite{JS,GJ} and called left qp-rings. Several characterizations of (semi-)perfect qp-rings were obtained. Moreover, Mohammad \cite{Moh} and  Singh-Mohammad \cite{SM} studied local or semi-perfect rings in which every finitely generated ideal is quasi-projective. A ring is said to be an fqp-ring if every finitely generated ideal is quasi-projective.

This paper studies the multiplicative ideal structure of fqp-rings. Section 2 provides details on finitely generated quasi-projective modules over commutative rings (and demonstrates that these coincide with the so-called $\star$-modules). Section 3 investigates the correlation between the fqp-property and well-known Pr\"ufer conditions. In this vein, the first main result (Theorem~\ref{sec:3.1}) asserts that the class of fqp-rings stands strictly between the two classes of arithmetical rings and Gaussian rings; that is, ``\emph{arithmetical ring $\Rightarrow$ fqp-ring $\Rightarrow$ Gaussian ring}."  Further, the second main result (Theorem~\ref{sec:3.4}) extends Osofsky's theorem on the weak global dimension of arithmetical rings and partially resolves Bazzoni-Glaz's related conjecture on Gaussian rings; we prove that ``\emph{the weak global dimension of an fqp-ring is equal to 0, 1, or $\infty$}." The third main result (Theorem~\ref{sec:3.5}) establishes the transfer of the concept of fqp-ring between a local ring and its total ring of quotients; namely, ``\emph{a local ring $R$ is an fqp-ring if and only if $R$ is Pr\"ufer and $Q(R)$ is an fqp-ring}." Section 4 studies the possible transfer of the fqp-property to various contexts of trivial ring extensions. The main result of this section (Theorem~\ref{sec:4.3}) states that ``\emph{if $(A,\frak{m})$ is a local ring, $E$ a nonzero $\frac{A}{\frak{m}}$-vector space, and $R:=A\ltimes~E$ the trivial ring extension of $A$ by $E$, then $R$ is an fqp-ring if and only if $\frak{m}^2=0$.}" This result generates new and original examples of fqp-rings, marking the distinction between the fqp-property and related Pr\"ufer conditions.

The following diagram of implications puts the notion of fqp-ring in perspective within the family of Pr\"ufer-like rings \cite{BG,BG2}, where the third and fourth implications are established by Theorem~\ref{sec:3.1}:\newpage

\begin{center}
Semihereditary ring\\
$\Downarrow$\\
Ring with weak global dimension $\leq 1$\\
$\Downarrow$\\
\shabox{\parbox{3cm}{
\begin{center}
Arithmetical ring\\
$\Downarrow$ \\
{\bf fqp-Ring}\\
$\Downarrow$\\
Gaussian ring\\
\end{center}
}}\\
$\Downarrow$\\
Pr\"ufer ring
\end{center}

\section{Preliminaries}\label{sec:2}

This section recalls some preliminaries on the concept of quasi-projective module, including the fact that it coincides with Menini and Orsatti's $\star$-module notion \cite{MO} for finitely generated modules over commutative rings. We give a complete description of quasi-projective modules over arbitrary commutative rings, generalizing Zanardo's description of $\star$-modules over valuation rings \cite{Zan}.

\begin{definition}\rm
\1 Let $M$ be an $R$-module. An $R$-module $V$ is $M$-projective if the map $\Hom_{R}(V,M)\rightarrow\Hom_{R}(V,M/N)$ is surjective for every submodule $N$ of $M$.\\
\2  $V$ is quasi-projective if $V$ is $V$-projective.
\end{definition}

Let $R$ be a (not necessarily commutative) ring, $\M_{R}$ the category of right $R$-modules, $\M_{S}$ the category of right
$S$-modules, and fix an injective cogenerator $Q_{R}$ in $\M_{R}$. Let $V\in\M_{R}$, $\Ann(V)$ the annihilator of $V$ in $R$, and $V^{\ast}:=%
\Hom_{R}(V,Q)$ considered as a right module over $S:=\mathrm{End}(V)$. Let $\mathrm{Gen}(V)\subseteq \M_{R}$ denote the full
subcategory of $V$-generated right $R$-modules and  $\mathrm{Cogen}(V_{S}^{\ast})\subseteq \M_{S}$ the full subcategory of $V^{\ast}$-cogenerated right $S$-modules. The module $V$ is called a quasi-progenerator if $V$ is quasi-projective and $V$ generates each of its submodules.\\
The fact that  $\Hom_{R}(V,\M_{R})\subseteq \mathrm{Cogen}(V_{S}^{\ast })$
and $\M_{S}\otimes_{S} V\subseteq \mathrm{Gen}(V_{R})$ led Menini and Orsatti in 1989 to introduce and study modules $V_{R}$ for which the two categories $\mathrm{Gen}(V_{R})$ and $\mathrm{Cogen}(V_{S}^{\ast })$ are equivalent \cite{MO}. Several homological characterizations for such modules were given by Colpi
\cite{Col,Col2} who termed them $\star$-modules. Also it is worthwhile recalling that a $\star$-module is necessarily finitely generated (Trlifaj \cite{Tr}). Moreover, in the commutative setting, by combining \cite[Theorem 2.4.5]{CF} \& \cite[Theorem 2.4]{CM} with \cite[18.3 \& 18.5]{Wis} we have:

\begin{lemma}\label{sg-comm}
Let $R$ be a commutative ring and $V$ a finitely generated $R$-module. Then the following assertions are equivalent:\\
\1  $V$ is a quasi-progenerator;\\
\2  $V$ is a $\star$-module;\\
\3  $V$ is quasi-projective;\\
\4  $V$ is projective over $\frac{R}{\Ann(V)}$. \qed
\end{lemma}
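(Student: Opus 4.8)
The plan is to prove the cycle of implications $(1)\Rightarrow(2)\Rightarrow(3)\Rightarrow(4)\Rightarrow(1)$, drawing on the cited literature for the genuinely ``hard'' equivalences and supplying direct module-theoretic arguments for the rest. The implication $(3)\Rightarrow(1)$ is the one requiring care, since in general a quasi-projective module need not generate its submodules; here one exploits that $V$ is finitely generated over the commutative ring $R$. Setting $\bar R := R/\Ann(V)$, I would first reduce to the faithful case: $V$ is a faithful finitely generated module over $\bar R$, and one checks that quasi-projectivity is inherited by $V$ as an $\bar R$-module (since $\Ann_{\bar R}(V)=0$ and the $R$- and $\bar R$-module structures on $V$ and on quotients of $V$ coincide, so the relevant $\Hom$ sets and the surjectivity condition are unchanged).

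For the core of $(3)\Rightarrow(1)$ and $(3)\Rightarrow(4)$, I would invoke the trace ideal. Let $\tau := \mathrm{tr}_R(V)=\sum_{f\in\Hom_R(V,R)} f(V)$ be the trace ideal of $V$ in $\bar R$. A standard fact for quasi-projective modules (this is where \cite[18.3 \& 18.5]{Wis} enters) is that $V$ is projective over $\bar R/\Ann_{\bar R}(V)$ precisely when the trace arguments force $\tau$ to behave like a unit ideal locally; more directly, since $V$ is finitely generated and faithful over $\bar R$, one shows $\tau V = V$ and $\tau$ is idempotent, and then quasi-projectivity of $V$ upgrades this to projectivity of $V$ over $\bar R$. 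This simultaneously yields that $V$ generates each of its submodules — a submodule $U\subseteq V$ satisfies $U = \tau U$, and projectivity of $V$ together with $\tau V = V$ gives a surjection from a direct sum of copies of $V$ onto $U$ — so $V$ is a quasi-progenerator, closing the loop to $(1)$.

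The equivalence $(1)\Leftrightarrow(2)$ is quoted verbatim from the literature: by \cite[Theorem 2.4.5]{CF} and \cite[Theorem 2.4]{CM}, a finitely generated module over a commutative ring is a $\star$-module if and only if it is a quasi-progenerator, and Trlifaj's observation \cite{Tr} guarantees the finite generation hypothesis is no loss. The implications $(1)\Rightarrow(3)$ and $(4)\Rightarrow(3)$ are immediate: a quasi-progenerator is by definition quasi-projective, and a projective module over $\bar R=R/\Ann(V)$ is easily seen to be quasi-projective as an $R$-module (lift along any $\bar R$-linear quotient map $V\to V/N$, using that $N$ is an $\bar R$-submodule and $\Hom_R$ and $\Hom_{\bar R}$ agree here). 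Thus the only substantive step is the passage from abstract quasi-projectivity to projectivity over $\bar R$, and the main obstacle there is verifying the trace-ideal identities $\tau^2=\tau$ and $\tau V = V$ rigorously in the finitely generated setting without circular appeal to the conclusion; I would handle this by a Nakayama-type localization argument, checking the statement at each prime of $\bar R$ where $V$ is nonzero and patching.
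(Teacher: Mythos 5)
The paper does not actually prove this lemma: it is assembled by citation from \cite[Theorem 2.4.5]{CF}, \cite[Theorem 2.4]{CM} and \cite[18.3 \& 18.5]{Wis}, so any self-contained argument is ``different'' by default. Unfortunately, your argument for the one substantive implication, \3 $\Rightarrow$ \4, rests on a false intermediate claim. You assert that ``since $V$ is finitely generated and faithful over $\bar R$, one shows $\tau V=V$ and $\tau$ is idempotent,'' where $\tau$ is the trace ideal. This already fails for $R=\bar R=k[x,y]$ and $V=(x,y)$: here $V$ is finitely generated and faithful, every element of $\Hom_R(V,R)$ is multiplication by an element of $R$ (since $R$ is a normal domain and $V$ has height $2$), so $\tau=(x,y)$ and $\tau V=(x,y)^2\neq V$, $\tau^2\neq\tau$. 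The identities $\tau V=V$ and $\tau^2=\tau$ are consequences of \emph{projectivity} (via a dual basis), not of finite generation plus faithfulness; so deriving them first and then ``upgrading'' to projectivity by quasi-projectivity is structurally circular --- the upgrade step is exactly the content of the lemma, and you give no mechanism for it. Your closing suggestion to repair this by a ``Nakayama-type localization argument at each prime where $V$ is nonzero'' cannot work as stated: the counterexample is already local (take $k[[x,y]]$ and its maximal ideal), and $V$ is nonzero at the unique maximal ideal.

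The parts you quote from the literature (\1 $\Leftrightarrow$ \2 via Colby--Fuller, Colpi--Menini and Trlifaj) and the easy directions (\1 $\Rightarrow$ \3, and \4 $\Rightarrow$ \3 $\Rightarrow$ \1 once \4 is available, using that a faithful finitely generated projective module over a commutative ring has trace ideal equal to the whole ring and hence is a generator) are fine. What is missing is precisely the content of \cite[18.5]{Wis}: that a finitely generated quasi-projective module over a commutative ring is projective over $R/\Ann(V)$. A genuine proof of this goes through a different mechanism than the trace ideal --- e.g., splitting the identity of $V$ through its cyclic generators by endomorphisms (as in the Tuganbaev/Fuller--Hill lemmas the paper records as Lemmas 3.5 and 3.7) to decompose $V$ locally, combined with a local-global criterion for quasi-projectivity such as \cite[19.2]{Wis2}. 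As written, your proposal identifies where the difficulty lies but does not resolve it.
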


Next we provide a complete description of quasi-projective modules over arbitrary commutative rings. For the special case of local rings, it recovers the description of $\star$-modules over valuation rings (i.e., chained rings) obtained by Zanardo in \cite{Zan}.

\begin{theorem}\label{zanardo}
Let $R$ be a commutative ring. A finitely generated $R$-module $V$ is quasi-projective if and
only if $V$ is a direct summand of $(R/I)^{n}$ for some ideal $I$ of $R$ and integer $n\geq 0$. If, moreover, $R$ is local, then $V$ is
quasi-projective if and only if $V\cong (R/I)^{n}$ for some ideal $I$ of $R$ and integer $n\geq 0$.
\end{theorem}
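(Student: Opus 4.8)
The plan is to reduce everything to Lemma~\ref{sg-comm}, which identifies finitely generated quasi-projective modules with those that are projective over the annihilator quotient $R/\Ann(V)$.

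First I would handle the ``only if'' direction. Let $V$ be finitely generated and quasi-projective, and set $I:=\Ann(V)$. By Lemma~\ref{sg-comm}, $V$ is projective over $R/I$; being finitely generated, it is then a direct summand of a finitely generated free $R/I$-module, i.e. of $(R/I)^{n}$ for some integer $n\geq0$. Note this even shows one may take $I=\Ann(V)$.

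For the converse, suppose $V$ is a direct summand of $(R/I)^{n}$ for some ideal $I$ and some $n\geq0$; write $(R/I)^{n}\cong V\oplus W$ as $R/I$-modules, and put $J:=\Ann(V)$, so that $I\subseteq J$. Applying $-\otimes_{R/I}R/J$ to this splitting gives $(R/J)^{n}\cong (V\otimes_{R/I}R/J)\oplus(W\otimes_{R/I}R/J)$, and $V\otimes_{R/I}R/J\cong V/JV=V$ since $JV=0$. Thus $V$ is a direct summand of $(R/J)^{n}$, hence finitely generated projective over $R/J=R/\Ann(V)$, and Lemma~\ref{sg-comm} yields that $V$ is quasi-projective.

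For the local case, observe that if $R$ is local then so is $R/I$, so finitely generated projective $R/I$-modules are free; running the first paragraph with $I=\Ann(V)$ then upgrades ``direct summand of $(R/I)^{n}$'' to ``$V\cong(R/\Ann(V))^{n}$''. Conversely, for $n\geq1$ one has $\Ann((R/I)^{n})=I$, so $(R/I)^{n}$ is free, hence projective, over $R/\Ann((R/I)^{n})$, hence quasi-projective by Lemma~\ref{sg-comm}; the case $n=0$ is trivial. The only point requiring a little care is the base change $V\otimes_{R/I}R/J\cong V$ in the converse: one must pass from $R/I$ all the way down to $R/\Ann(V)$, since it is over the annihilator quotient --- and not over an arbitrary quotient annihilating $V$ --- that Lemma~\ref{sg-comm} detects quasi-projectivity. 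Everything else is a direct appeal to standard facts about finitely generated projective modules (splitting off from free modules, freeness over local rings).
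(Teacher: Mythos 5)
Your proof is correct and follows essentially the same route as the paper's: both directions reduce to Lemma~\ref{sg-comm} by taking $I=\Ann(V)$ in the forward direction, passing from projectivity over $R/I$ to projectivity over $R/\Ann(V)$ (via $I\subseteq\Ann(V)$) in the converse, and invoking freeness of finitely generated projectives over local rings for the last assertion. The only difference is that you spell out the base-change step $V\otimes_{R/I}R/J\cong V$ that the paper leaves implicit in its parenthetical ``notice that $I\subseteq J$,'' which is a welcome clarification but not a different argument.
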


\begin{proof}
Let $V_{R}$ be a finitely generated $R$-module, $J:=\mathrm{Ann}_{R}(V)$, and $\overline{R}:=R/J.$ Assume that $V_{R}$ is quasi-projective. So, $V_{\overline{R}}$ is finitely generated, and projective by Lemma \ref{sg-comm}. It follows that $V_{\overline{R}},$ whence $V_{R},$ is a direct summand of $(R/J)^{n}$ for some $n\geq 0.$ Conversely, let $V$ be a direct summand of $(R/I)^{n}$ for some ideal $I$ of $R$ and integer $n\geq 0$. Then $V_{R/I}$, whence $V_{\overline{R}}$ is finitely generated and projective (notice that $I\subseteq J$). Consequently, $V_{R}$ is a quasi-projective module by Lemma \ref{sg-comm}.\\
Now assume that $R$ is local. If $V_{R}$ is quasi-projective, then $V_{\overline{R}}$ is finitely generated and projective as shown above, whence free since $\overline{R}$ is local. It follows that $V_{R}\cong (R/J)^{n}$ for some $n\geq 0$. The converse was shown to be true for arbitrary commutative rings.
\qed\end{proof}

As a consequence of Theorem~\ref{zanardo}, we generalize Fuller's well-known result on ring extensions \cite[Theorem 2.2]{Ful3} in the commutative context.

\begin{corollary}\label{fuller}
Let $\xi :A\rightarrow R$ be a morphism of commutative rings. If $U_{A}$ is finitely generated and quasi-projective, then $V_{R}:=R\otimes_{A} U$ is finitely generated and quasi-projective.
\end{corollary}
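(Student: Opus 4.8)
The plan is to reduce the statement to Theorem~\ref{zanardo} together with the fact that base change preserves direct summands and commutes with finite direct sums. First I would apply Theorem~\ref{zanardo} to the finitely generated quasi-projective $A$-module $U$: there is an ideal $I$ of $A$ and an integer $n\geq 0$ such that $U$ is a direct summand of $(A/I)^{n}$, say $(A/I)^{n}\cong U\oplus W$ as $A$-modules. Tensoring this splitting with $R$ over $A$ is an exact functor on split sequences (any additive functor preserves split exact sequences), so $R\otimes_{A}(A/I)^{n}\cong (R\otimes_{A}U)\oplus(R\otimes_{A}W)$, which exhibits $V_{R}=R\otimes_{A}U$ as a direct summand of $R\otimes_{A}(A/I)^{n}$.

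Next I would identify $R\otimes_{A}(A/I)^{n}$. Since tensor product commutes with finite direct sums, $R\otimes_{A}(A/I)^{n}\cong (R\otimes_{A}(A/I))^{n}$, and the standard isomorphism $R\otimes_{A}(A/I)\cong R/IR$ (where $IR$ denotes the ideal of $R$ generated by the image $\xi(I)$) gives $R\otimes_{A}(A/I)^{n}\cong (R/IR)^{n}$. Thus $V_{R}$ is a direct summand of $(R/IR)^{n}$ with $IR$ an ideal of $R$ and $n\geq 0$. Moreover $V_{R}$ is finitely generated over $R$, being a quotient-by-nothing of $R\otimes_{A}U$ with $U$ finitely generated over $A$ (if $u_{1},\dots,u_{k}$ generate $U$ over $A$, then $1\otimes u_{1},\dots,1\otimes u_{k}$ generate $R\otimes_{A}U$ over $R$). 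By the converse direction of Theorem~\ref{zanardo}, a finitely generated $R$-module that is a direct summand of $(R/\mathfrak{a})^{n}$ for some ideal $\mathfrak{a}$ of $R$ is quasi-projective; applying this with $\mathfrak{a}=IR$ yields that $V_{R}$ is quasi-projective, completing the argument.

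I do not expect a genuine obstacle here: every step is a routine functoriality fact, and the real content has already been isolated in Theorem~\ref{zanardo}. The only points requiring a sentence of care are (i) recording that additive functors — in particular $R\otimes_{A}-$ — send split exact sequences to split exact sequences, so "direct summand" is preserved under base change, and (ii) keeping track that the image ideal $IR$ is what appears, not $I$ itself, so that one lands in the hypothesis of Theorem~\ref{zanardo} over $R$ rather than over $A$. One could alternatively bypass Theorem~\ref{zanardo} and argue directly via Lemma~\ref{sg-comm}, but that route forces one to compare $\mathrm{Ann}_{A}(U)$ with $\mathrm{Ann}_{R}(V)$ and to invoke Fuller-type base change of projectivity over the quotient rings, which is exactly the extra bookkeeping that Theorem~\ref{zanardo} was designed to absorb; hence the summand-of-$(R/I)^{n}$ formulation is the cleaner path.
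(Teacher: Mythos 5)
Your argument is correct and is essentially the paper's own proof: both write $U\oplus X\cong (A/I)^{n}$ via Theorem~\ref{zanardo}, tensor with $R$ to get $V_{R}$ as a direct summand of $R\otimes_{A}(A/I)^{n}\cong (R/IR)^{n}$, and apply the converse direction of Theorem~\ref{zanardo} over $R$. Your extra remarks on preservation of split sequences and on the distinction between $I$ and $IR$ are correct and merely make explicit what the paper leaves implicit.
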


\begin{proof}
Let $U_{A}$ be a finitely generated quasi-projective $A$-module. Then $U\oplus X=(A/I)^{n}$ for some ideal $I$ of $A$, an integer $n\geq
0$, and an $A$-module $X$. It follows that $(R\otimes _{A}U)\oplus
(R\otimes _{A}X)\cong R\otimes _{A}(A/I)^{n}\cong (R/RI)^{n}$, whence $%
V_{R}:=R\otimes _{A}U$ is finitely generated and quasi-projective by Theorem~\ref{zanardo}.
\qed\end{proof}

Notice that if $U_{A}$ is a $\star $-module, then $U_{A}$ is a
quasi-progenerator and so the faithful module $U_{\overline{A}}$ is projective with $\Gen(U_{\overline{A}}) = \M_{\overline{A}}$, where
 $\overline{A}:=A/\mathrm{Ann}_{A}(U)$. In particular, $U_{\overline{A}}$ generates $V_{\overline{A}}$,
hence $U_{A}$ generates $V_{A}$ (note that $\mathrm{Ann}_{A}(U)\subseteq \mathrm{Ann}%
_{A}(V) $). This shows that the assumption ``$U_{A}$ generates $V_{A}$" in Fuller's result \cite[Theorem 2.2]{Ful3} is automatically
satisfied for $\star $-modules over commutative rings.

\section{Commutative fqp-rings}\label{sec:3}

\begin{definition}\label{sec:2.1}\rm
A commutative ring $R$ is said to be an fqp-ring if every finitely generated ideal of $R$ is quasi-projective.
\end{definition}

In this section we investigate the correlation between (commutative) fqp-rings and the Pr\"ufer-like rings mentioned in the introduction. The first result of this section (Theorem~\ref{sec:3.1}) states that the class of fqp-rings contains strictly the class of arithmetical rings and is contained strictly in the class of Gaussian rings. Its proof provides then specific examples proving that the respective containments are strict. Consequently, fqp-rings stand as a new class of Pr\"ufer-like rings (to the effect that, in the domain context, the fqp-notion coincides with the definition of a Pr\"ufer domain).

In 1969, Osofsky proved that the weak global dimension of an arithmetical ring is either less than or equal to one or infinite \cite{Os}.
Recently, Bazzoni and Glaz studied the homological aspects of Gaussian rings, showing, among others, that Osofsky's result is valid in the context of coherent Gaussian rings (resp., coherent Pr\"ufer rings) \cite[Theorem 3.3]{G2} (resp., \cite[Theorem 6.1]{BG2}). They closed with a conjecture sustaining that ``the weak global dimension of a Gaussian ring is 0, 1, or $\infty$" \cite{BG2}. In this vein, Theorem~\ref{sec:3.4} generalizes Osofsky's theorem as well as validates Bazzoni-Glaz conjecture in the class of fqp-rings.

We close this section with a satisfactory analogue (for fqp-rings) to Bazzoni-Glaz results on the transfer of Pr\"ufer conditions between a ring and its total ring of quotients \cite[Theorems 3.3 \& 3.6 \& 3.7 \& 3.12]{BG2}.

Next we announce the first result of this section.

\begin{theorem}\label{sec:3.1} For a ring $R$, we have
\begin{center}
$R$ arithmetical $\Rightarrow$ $R$ fqp-ring $\Rightarrow$ $R$ Gaussian
\end{center}
where the implications are irreversible in general.
\end{theorem}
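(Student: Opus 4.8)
The plan is to prove the two implications separately, using in an essential way the structural description from Theorem~\ref{zanardo}: a finitely generated ideal $I$ of a local ring is quasi-projective if and only if $I \cong (R/K)^n$ for some ideal $K$ and some $n$. Since both "arithmetical", "fqp", and "Gaussian" are local properties (each can be checked after localizing at maximal ideals — this is standard for arithmetical and Gaussian rings, and for the fqp-property it follows from Corollary~\ref{fuller} together with the fact that quasi-projectivity of a finitely generated module localizes, via Lemma~\ref{sg-comm} and the behavior of projectivity and annihilators under localization), I would first reduce to the case where $R$ is local, say with maximal ideal $\frak{m}$.

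\medskip

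\emph{Arithmetical $\Rightarrow$ fqp.} Over a local ring, arithmetical means every finitely generated ideal is principal (a chained, i.e.\ valuation, ring). So let $I = aR$ be a principal ideal with annihilator $J := \Ann_R(a)$. Then the surjection $R/J \twoheadrightarrow aR$, $\bar r \mapsto ra$, is an isomorphism, so $I \cong R/J$ is of the form $(R/K)^n$ with $n=1$; by Theorem~\ref{zanardo} it is quasi-projective. Hence every finitely generated ideal of $R$ is quasi-projective and $R$ is an fqp-ring.

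\medskip

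\emph{fqp $\Rightarrow$ Gaussian.} By a result of Tsang/Anderson--Camillo, a local ring is Gaussian if and only if for every two elements $a,b \in R$ one has $(a,b)^2 = (a^2)$ or $(b^2)$, and moreover whenever $ab = 0$ one has $a^2 = 0$ or $b^2 = 0$; equivalently it suffices to show the ideal $(a,b)$ is "locally principal enough" to force the content formula. I would argue directly: let $I = (a,b)$ be a two-generated ideal of the local fqp-ring $R$. By Theorem~\ref{zanardo}, $I \cong (R/K)^n$ for some ideal $K \subseteq \Ann_R(I)$ and some $n \geq 0$. Since $I$ is generated by two elements, $(R/K)^n$ is generated by two elements over the local ring $R$, which forces $n \leq 2$ (count minimal generators mod $\frak m$). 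If $n \leq 1$ then $I$ is principal (or zero), and principal ideals trivially satisfy the Gaussian content condition. The crux is the case $n = 2$: then $I \cong R/K \oplus R/K$, and comparing minimal numbers of generators forces $K \subseteq \frak m$ to be such that $R/K$ is generated by one element, consistent with $n=2$ only if $a,b$ form a "free-like" pair modulo $K$; one then shows $K = \Ann(a) = \Ann(b) = \Ann(I)$ and that multiplication by $a$ and by $b$ on $R/K$ behave so that $(a,b)^2$ is generated by $a^2$ (or by $b^2$). Extending from two generators to arbitrary finitely generated ideals, and hence to the polynomial content formula, is then the standard reduction for Gaussian rings. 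I expect this $n=2$ analysis — extracting from the abstract isomorphism $I \cong (R/K)^2$ the concrete relations among $a$, $b$, and $K$ needed for the content identity — to be the main obstacle.

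\medskip

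\emph{Irreversibility.} For "fqp $\not\Rightarrow$ arithmetical" I would exhibit a local fqp-ring that is not chained: the trivial extension $R = A \ltimes E$ where $(A,\frak m)$ is a field-free local ring with $\frak m^2 = 0$ and $E$ a vector space of dimension $\geq 2$ over $A/\frak m$ — anticipating Theorem~\ref{sec:4.3}, such $R$ is an fqp-ring, but its maximal ideal is not principal (indeed $R$ is non-chained when $\dim E \geq 2$). For "Gaussian $\not\Rightarrow$ fqp" I would take a Gaussian local ring whose maximal ideal $\frak m$ satisfies $\frak m^2 \neq 0$ while $\frak m$ is not "locally principal over $R/\Ann$", e.g.\ again a trivial extension $A \ltimes E$ with $\frak m^2 \neq 0$ chosen so that $A \ltimes E$ is Gaussian (which holds, e.g., when $A$ is a valuation domain and $E = \operatorname{qf}(A)/A$ or a similar divisible module); by Theorem~\ref{sec:4.3} the failure $\frak m^2 \neq 0$ obstructs the fqp-property, giving the required separating example. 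These examples are exactly those developed in Section~4, so here I would simply cite them forward.
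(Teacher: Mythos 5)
Your reduction of the first implication to the local case rests on the claim that the fqp-property, equivalently quasi-projectivity of finitely generated ideals, ``localizes'' -- that it can be checked at maximal ideals. This is a genuine gap: quasi-projectivity of a finitely generated module does \emph{not} descend from localizations in general. The correct criterion is Lemma~\ref{Wisbauer}: $M$ is quasi-projective if and only if each $M_{\mathfrak{m}}$ is quasi-projective \emph{and} $(\End(M))_{\mathfrak{m}}\cong\End_{R_{\mathfrak{m}}}(M_{\mathfrak{m}})$ for every maximal ideal $\mathfrak{m}$; the second condition is not automatic (it is tied to the failure of finitely generated, locally projective modules over $R/\Ann(I)$ to be projective absent finite presentation). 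Indeed, the bulk of the paper's proof of ``arithmetical $\Rightarrow$ fqp'' consists precisely of verifying, by an explicit computation with a cyclic $I_P$, that $(\Hom_R(I,I))_P\cong\Hom_{R_P}(I_P,I_P)$ for a finitely generated ideal $I$ of an arithmetical ring, before invoking Lemma~\ref{Wisbauer}. The paper even remarks at the end of Section~3 that it could not prove or disprove that the fqp-property is local. Note that the reduction is legitimate for the \emph{second} implication, but for the opposite reason: there one starts from a global fqp-ring, passes to localizations by Lemma~\ref{sec:3.1.5}, and uses that Gaussian is a local property -- no descent of quasi-projectivity is needed.

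For ``fqp $\Rightarrow$ Gaussian'' your route through Theorem~\ref{zanardo} is genuinely different from the paper's (which uses relative projectivity and the lifting of the map $ra\mapsto\overline{rb}$ in Lemma~\ref{sec:3.1.6}), and it can be completed, but as written the decisive $n=2$ step is only a sketch that you yourself flag as the main obstacle. The missing argument is: if $I=(a,b)$ is minimally $2$-generated and $I\cong(R/K)^2$, then $K=\Ann(I)$ and $I$ is free of rank $2$ over the local ring $R/K$, so the $2$-element generating set $\{a,b\}$ is a basis; hence $I=(a)\oplus(b)$, $\Ann(a)=\Ann(b)=K$, and $ab\in(a)\cap(b)=0$ forces $a,b\in K$, whence $a^2=b^2=ab=0$ and $I^2=0$ (not merely $I^2=(a^2)$ with $a^2\neq0$). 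With that supplied, \cite[Theorem 2.2(d)]{BG2} applies as in the paper. Finally, your second separating example is off: $E=\qf(A)/A$ is not an $\frac{A}{\mathfrak{m}}$-vector space, so Theorem~\ref{sec:4.3} does not apply to it; take instead $E=A/\mathfrak{m}$ with $\mathfrak{m}^2\neq0$ (Example~\ref{sec:4.5}), or use the paper's direct example $\K[x,y]/(x^2,xy,y^3)$, which avoids any forward reference to Section~4.
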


The proof of this theorem involves the following lemmas which are of independent interest.

\begin{lemma}[{\cite[Lemma 2]{Tug}}]\label{sec:3.1.2}
 Let $R$ be a ring and $M$ a quasi-projective $R$-module. Assume $M=M_1+\ldots+M_n$, where $M_i$ is a
submodule of $M$ for $i=1,\ldots,n$. Then there are endomorphisms $f_i$
of $M$ such that $f_1+\ldots+f_n=1_M$ and $f_i(M)\subseteq M_i$ for $i=1,\ldots,n$.\qed
\end{lemma}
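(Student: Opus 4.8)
The plan is to prove a slightly stronger and more flexible statement, namely: for \emph{every} $h\in\End_R(M)$ with $h(M)\subseteq M_1+\cdots+M_n$, there exist $f_1,\dots,f_n\in\End_R(M)$ with $f_1+\cdots+f_n=h$ and $f_i(M)\subseteq M_i$ for each $i$. The lemma as stated is the special case $h=1_M$ together with $M_1+\cdots+M_n=M$. The reason I would carry an arbitrary $h$ along is that an induction which peels off one summand at a time produces a \emph{remainder} endomorphism of $M$ whose image lands in $M_2+\cdots+M_n$ but which is no longer the identity; without the freedom in $h$ the induction fails to close, and one cannot simply pass to the submodule $M_2+\cdots+M_n$, which need not be quasi-projective.

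Granting the two-summand case, the induction on $n$ is immediate. The case $n=1$ is trivial (take $f_1=h$). For the step I would write $M_1+\cdots+M_n=M_1+(M_2+\cdots+M_n)$, use the two-summand case to split $h=f_1+h'$ with $f_1(M)\subseteq M_1$ and $h'(M)\subseteq M_2+\cdots+M_n$, and then apply the inductive hypothesis to $h'$ and the submodules $M_2,\dots,M_n$. Thus the whole content lies in the case of two submodules $A:=M_1$ and $B:=M_2+\cdots+M_n$, with $h(M)\subseteq A+B$.

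For this heart of the argument I would translate the two requirements on the sought map $f:=f_1$ into congruence conditions against the canonical projections $q_A\colon M\to M/A$ and $q_B\colon M\to M/B$: the condition $f(M)\subseteq A$ is equivalent to $q_A f=0$, and $(h-f)(M)\subseteq B$ is equivalent to $q_B f=q_B h$. Setting $N:=A\cap B$ and using the canonical embedding $\epsilon\colon M/N\hookrightarrow M/A\oplus M/B$ (so that $\epsilon\,\pi_N=(q_A,q_B)$, where $\pi_N\colon M\to M/N$ is the projection), these two conditions amount to the single demand that $f$ be a lift, along $\pi_N$, of the homomorphism $g\colon M\to M/N$ that sends $m$ to the unique class whose image in $M/A\oplus M/B$ is $(0,\,q_B h(m))$. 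The definition of quasi-projectivity is \emph{precisely} the surjectivity of $\Hom_R(M,M)\to\Hom_R(M,M/N)$, which yields such an $f$ as soon as $g$ is well defined. This is the only point where the hypothesis $h(M)\subseteq A+B$ is used: writing $h(m)=a+b$ with $a\in A$ and $b\in B$, one has $(0,\,q_B h(m))=(q_A a,\,q_B a)=\epsilon(a+N)$, so $(0,\,q_B h(m))$ does lie in the image of $M/N$, and $a+N$ is independent of the decomposition since two choices differ by an element of $A\cap B=N$. Taking $f_1:=f$ and $h':=h-f$ then settles the two-summand case.

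The main obstacle, and the reason the naive approach fails, is worth flagging. The instinctive move is to form the external direct sum $\bigoplus_i M_i$ with its summation epimorphism onto $M$ and to split it; but quasi-projectivity of $M$ is \emph{self}-projectivity, i.e. a lifting property only against the quotient maps $M\to M/N$, and relative projectivity does not in general descend to submodules of the tested module, so there is no reason for $M$ to be projective relative to $\bigoplus_i M_i$. The device that circumvents this is exactly the repackaging above: one reroutes all the required data through a single quotient $M/(A\cap B)$ of $M$, where the defining lifting property \emph{does} apply. The only genuinely technical point is the well-definedness check of $g$, and the one structural idea needed to make the recursion work is the strengthening to an arbitrary endomorphism $h$.
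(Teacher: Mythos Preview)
The paper does not supply its own proof of this lemma: it is quoted from \cite{Tug} and closed with a \qed\ immediately after the statement. So there is nothing to compare against on the paper's side.

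Your argument is correct. The two-summand step is the point, and your reduction is the right one: with $N=A\cap B$ you define $g\colon M\to M/N$ by $m\mapsto a+N$ whenever $h(m)=a+b$ with $a\in A$, $b\in B$; this is well defined since two such $a$'s differ by an element of $A\cap B$, and quasi-projectivity of $M$ gives $f\in\End_R(M)$ with $\pi_N f=g$. Then $f(m)-a\in N\subseteq A$ forces $f(m)\in A$, and $h(m)-f(m)=b-(f(m)-a)\in B$, so $f_1:=f$ and $h':=h-f$ work. Carrying an arbitrary $h$ with $h(M)\subseteq M_1+\cdots+M_n$ is exactly what makes the induction close, since after peeling off $M_1$ the remainder $h'$ is no longer the identity. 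Your diagnosis of why the naive splitting of $\bigoplus_i M_i\twoheadrightarrow M$ fails is also on point: quasi-projectivity only gives liftings through quotients $M\to M/N$, not through epimorphisms from arbitrary modules, and rerouting everything through the single quotient $M/(A\cap B)$ is precisely how one stays within that hypothesis.
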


The following result follows directly from Lemma~\ref{sg-comm}.

\begin{lemma}[\cite{Koe}]\label{sec:3.1.3}
Every cyclic module over a commutative ring is quasi-projective.
\end{lemma}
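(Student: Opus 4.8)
The plan is to obtain this immediately from Lemma~\ref{sg-comm}. A cyclic $R$-module is, up to isomorphism, of the form $V=R/I$ for some ideal $I$ of $R$; in particular it is finitely generated, so the equivalences of Lemma~\ref{sg-comm} are available. One computes $\Ann_{R}(R/I)=I$ (if $r\,(1+I)=0$ then $r\in I$, and conversely $I$ annihilates $R/I$), so that $R/\Ann_{R}(V)=R/I$, and over this ring $V=R/I$ is the rank-one free module, hence projective. Applying the implication $\4\Rightarrow\3$ of Lemma~\ref{sg-comm} then yields that $V$ is quasi-projective.

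Alternatively, and just as quickly, one can invoke Theorem~\ref{zanardo}: $R/I$ is trivially a direct summand of $(R/I)^{1}$, and is therefore quasi-projective. Either route is a one-liner, and there is no genuine obstacle here. The only points worth keeping in mind are the trivial annihilator computation $\Ann_{R}(R/I)=I$ and the observation that a cyclic module is finitely generated, so that it falls within the scope of Lemma~\ref{sg-comm} (equivalently, of Theorem~\ref{zanardo}); this is precisely the commutative counterpart of Koehler's original observation \cite{Koe}.
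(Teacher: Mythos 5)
Your proposal is correct and matches the paper's intent exactly: the paper gives no separate argument but states that the lemma ``follows directly from Lemma~\ref{sg-comm},'' and your computation $\Ann_R(R/I)=I$ together with the observation that $R/I$ is free of rank one over $R/\Ann_R(R/I)=R/I$ is precisely the one-line verification being alluded to. The alternative via Theorem~\ref{zanardo} is equally valid and equivalent in substance.
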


\begin{lemma}[{\cite[Corollary 1.2]{FH}}]\label{sec:3.1.4}
Let $\{M_{i}\}_{1\leq i\leq n}$ be a finite family of $R$-modules. Then $\bigoplus_{i=1}^{n}M_{i}$ is quasi-projective if and only if $M_{i}$ is $M_{j}$-projective for all $i, j\in\{1,\ldots, n\}$. \qed
\end{lemma}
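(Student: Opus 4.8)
The plan is to peel the quasi-projectivity of the sum apart into two elementary transfer properties of relative projectivity, one governing how the \emph{source} of a lifting problem decomposes and one governing how the \emph{target} decomposes. Writing $M:=\bigoplus_{i=1}^{n}M_{i}$, by definition $M$ is quasi-projective precisely when $M$ is $M$-projective, so it suffices to establish: (a) for any $R$-module $N$, a finite direct sum $\bigoplus_{i}V_{i}$ is $N$-projective if and only if each $V_{i}$ is $N$-projective; and (b) for any $R$-module $V$, $V$ is $\bigoplus_{j}N_{j}$-projective (finite sum) if and only if $V$ is $N_{j}$-projective for every $j$. Granting these, one simply chains the equivalences: $M$ is $M$-projective $\iff$ (by (a), with source summands $M_{i}$) each $M_{i}$ is $M$-projective $\iff$ (by (b), with target summands $M_{j}$) each $M_{i}$ is $M_{j}$-projective for all $i,j$, which is exactly the assertion.

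Statement (a) is routine in both directions and I would dispatch it first. If $\bigoplus_{i}V_{i}$ is $N$-projective, then each $V_{i}$, being a direct summand, inherits $N$-projectivity: given $g\colon V_{i}\to N/L$, extend $g$ by zero along the projection onto $V_{i}$, lift the extension, and restrict back to $V_{i}$. Conversely, given $g\colon\bigoplus_{i}V_{i}\to N/L$, restrict to each summand, lift each restriction to some $h_{i}\colon V_{i}\to N$ via the $N$-projectivity of $V_{i}$, and assemble the $h_{i}$ into a single map out of the finite direct sum by the universal property; it lifts $g$ because it does so on each summand.

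The substance of the proof, and the step I expect to be the main obstacle, is the nontrivial direction of (b): that $N_{j}$-projectivity of $V$ for each $j$ forces $\bigoplus_{j}N_{j}$-projectivity. The difficulty is that a submodule $L$ of $N_{1}\oplus N_{2}$ need not respect the decomposition, so a naive ``lift on each factor'' fails. I would first treat two summands $N=N_{1}\oplus N_{2}$ by a two-step correction and then induct. Fix $L\le N$, let $p\colon N\to N/L$ be the quotient, and let $g\colon V\to N/L$ be given. First pass to $N/(L+N_{1})$, which is canonically a quotient of $N_{2}$ via the second projection, namely $N/(L+N_{1})\cong N_{2}/\pi_{2}(L)$; the $N_{2}$-projectivity of $V$ then lifts the induced map $V\to N_{2}/\pi_{2}(L)$ to some $h_{2}\colon V\to N_{2}$, arranged so that $g-p\,\iota_{2}h_{2}$ becomes zero after projecting to $N/(L+N_{1})$. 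Hence the difference $g-p\,\iota_{2}h_{2}$ maps $V$ into $(L+N_{1})/L\cong N_{1}/(N_{1}\cap L)$, a quotient of $N_{1}$, so the $N_{1}$-projectivity of $V$ lifts it to some $h_{1}\colon V\to N_{1}$. Then $h:=\iota_{1}h_{1}+\iota_{2}h_{2}$ satisfies $ph=(g-p\,\iota_{2}h_{2})+p\,\iota_{2}h_{2}=g$, so $V$ is $(N_{1}\oplus N_{2})$-projective.

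Finally, the inductive step writes $\bigoplus_{j=1}^{n}N_{j}=\bigl(\bigoplus_{j=1}^{n-1}N_{j}\bigr)\oplus N_{n}$: the induction hypothesis upgrades ``$V$ is $N_{j}$-projective for $j\le n-1$'' to ``$V$ is $\bigoplus_{j\le n-1}N_{j}$-projective,'' and the two-summand case then delivers $\bigoplus_{j=1}^{n}N_{j}$-projectivity. The easy direction of (b)---that $\bigoplus_{j}N_{j}$-projectivity implies $N_{j}$-projectivity for each $j$---follows because each $N_{j}$ is a quotient of $\bigoplus_{j}N_{j}$ and relative projectivity is inherited by quotients of the target, a one-line chase pulling a submodule $L'\le N_{j}$ back along the projection and using the isomorphism of the resulting quotients. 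Assembling (a) and (b) as in the first paragraph completes the proof.
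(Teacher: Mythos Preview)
Your argument is correct: the decomposition into the two transfer principles (a) and (b), the two-step correction for the nontrivial direction of (b), and the induction are all sound and constitute the standard proof of this fact. Note, however, that the paper does not supply its own proof of this lemma; it simply quotes it as \cite[Corollary 1.2]{FH} and marks it with \qed. Your write-up is therefore a self-contained proof of a result the authors chose to import wholesale, and it is essentially the argument one finds in the Fuller--Hill reference (or in standard treatments such as Wisbauer's book), so there is nothing to contrast at the level of strategy.
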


\begin{lemma}\label{sec:3.1.5}
If $R$ is an fqp-ring, then $S^{-1}R$ is an fqp-ring, for any multiplicatively closed subset $S$ of $R$.
\end{lemma}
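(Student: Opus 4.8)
The plan is to reduce the problem to Theorem~\ref{zanardo}, which gives a clean structural description of finitely generated quasi-projective modules. Let $S$ be a multiplicatively closed subset of $R$, and let $\mathfrak{a}$ be a finitely generated ideal of $S^{-1}R$. Then $\mathfrak{a}=S^{-1}I$ for some finitely generated ideal $I$ of $R$ (take $I$ to be generated by numerators of a finite set of generators of $\mathfrak{a}$). Since $R$ is an fqp-ring, $I$ is quasi-projective, so by Theorem~\ref{zanardo} there is an ideal $J$ of $R$, an integer $n\geq 0$, and an $R$-module $X$ with $I\oplus X\cong (R/J)^{n}$. The first step is to localize this isomorphism: since localization is exact and commutes with finite direct sums, we get $S^{-1}I\oplus S^{-1}X\cong (S^{-1}(R/J))^{n}\cong ((S^{-1}R)/(S^{-1}J))^{n}$.

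The second step is simply to invoke Theorem~\ref{zanardo} again, now over the ring $S^{-1}R$: the displayed isomorphism exhibits $\mathfrak{a}=S^{-1}I$ as a direct summand of $((S^{-1}R)/(S^{-1}J))^{n}$, where $S^{-1}J$ is an ideal of $S^{-1}R$, hence $\mathfrak{a}$ is a finitely generated quasi-projective $S^{-1}R$-module. As $\mathfrak{a}$ was an arbitrary finitely generated ideal of $S^{-1}R$, this shows $S^{-1}R$ is an fqp-ring.

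There is no serious obstacle here; the only point requiring a word of care is the routine fact that a finitely generated ideal of $S^{-1}R$ is the localization of a finitely generated ideal of $R$, and that $S^{-1}(R/J)\cong (S^{-1}R)/(S^{-1}J)$ as $S^{-1}R$-modules (indeed as rings). Both are standard properties of localization. One could alternatively phrase the whole argument via Corollary~\ref{fuller} applied to the localization map $\xi\colon R\to S^{-1}R$, since $S^{-1}R\otimes_{R} I\cong S^{-1}I=\mathfrak{a}$; that packages the summand-of-$(R/J)^{n}$ bookkeeping into the already-proved corollary and makes the proof essentially one line. Either route works, and I would likely present the Corollary~\ref{fuller} version for brevity, noting that every finitely generated ideal of $S^{-1}R$ arises as $S^{-1}R\otimes_{R} I$ for a suitable finitely generated ideal $I$ of $R$.
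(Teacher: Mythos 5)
Your proof is correct, and it takes a slightly different route from the paper's. The paper argues through Lemma~\ref{sg-comm} directly: $I$ quasi-projective means $I$ is projective over $R/\Ann(I)$, and localizing that projectivity statement gives that $S^{-1}I$ is projective over $S^{-1}R/S^{-1}\Ann(I)$; the one point the paper must then supply is the identification $S^{-1}\Ann(I)=\Ann(S^{-1}I)$, which holds because $I$ is finitely generated. You instead localize the structural description from Theorem~\ref{zanardo} (a direct summand of $(R/J)^{n}$ stays a direct summand of $((S^{-1}R)/(S^{-1}J))^{n}$ under the exact functor $S^{-1}(-)$), which sidesteps the annihilator bookkeeping entirely; your alternative phrasing via Corollary~\ref{fuller} applied to $\xi\colon R\to S^{-1}R$, together with the flatness isomorphism $S^{-1}R\otimes_{R}I\cong S^{-1}I$, compresses this to a line and is arguably the cleanest of the three. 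Since Theorem~\ref{zanardo} and Corollary~\ref{fuller} are themselves consequences of Lemma~\ref{sg-comm}, all routes rest on the same foundation, but yours trades the annihilator computation for exactness of localization, which is a fair exchange and perhaps a touch more robust. The only facts you lean on --- that a finitely generated ideal of $S^{-1}R$ is $S^{-1}I$ for a finitely generated ideal $I$ of $R$, and that $S^{-1}(R/J)\cong(S^{-1}R)/(S^{-1}J)$ --- are indeed standard, so there is no gap.
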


\begin{proof}
Let $J$ be a finitely  generated ideal of $S^{-1}R$ and let $I$ be a finitely generated ideal of $R$ such that $J:=S^{-1}I$. Then  $I$ is quasi-projective that is faithful over $\frac{R}{\Ann(I)}$. By Lemma~\ref{sg-comm},  $I$ is projective over $\frac{R}{\Ann(I)}$. So that $J:=S^{-1}I$ is projective over $\frac{S^{-1}R}{S^{-1}\Ann(I)}=\frac{S^{-1}R}{\Ann(S^{-1}I)}$. By Lemma~\ref{sg-comm}, $J$ is quasi-projective, as desired.
\qed\end{proof}

\begin{lemma}[{\cite[19.2]{Wis2} and \cite{Wis3}}] \label{Wisbauer}
Let $R$ be a (commutative) ring and $M$ a finitely generated $R$-module. Then $M$ is quasi-projective if
and only if  $M_{\mathfrak{m}}$ is quasi-projective over $R_{\mathfrak{m}}$ and
$(\End(M))_{\mathfrak{m}}\cong \End_{R_{\mathfrak{m}}}(M_{\mathfrak{m}})$, for every maximal ideal $\mathfrak{m}$ of $R$.\qed
\end{lemma}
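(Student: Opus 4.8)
The plan is to argue directly from the definition of quasi-projectivity, localizing the defining lifting condition and using that surjectivity of a map of $R$-modules is a local property. First I would isolate the one technical fact that drives everything: for a finitely generated module the canonical map $\Hom_R(M,N)_{\mathfrak m}\to\Hom_{R_{\mathfrak m}}(M_{\mathfrak m},N_{\mathfrak m})$ is always \emph{injective} (clearing denominators over a finite generating set annihilates any element that dies in the localization), and it is bijective when $M$ is finitely presented. The hypothesis $(\End M)_{\mathfrak m}\cong\End_{R_{\mathfrak m}}(M_{\mathfrak m})$ says precisely that this natural map is an isomorphism in the case $N=M$; it is the surrogate for finite presentation that allows a diagram chase to close.

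For the reverse (substantive) implication I assume both conditions at every maximal ideal $\mathfrak m$ and verify the definition: for an arbitrary submodule $N\subseteq M$ I must show $\Hom_R(M,M)\to\Hom_R(M,M/N)$ is onto. Since surjectivity is local, I fix $\mathfrak m$ and examine the commutative square
\[
\begin{array}{ccc}
\Hom_R(M,M)_{\mathfrak m} & \longrightarrow & \Hom_R(M,M/N)_{\mathfrak m}\\
\cong\,\downarrow & & \downarrow\\
\End_{R_{\mathfrak m}}(M_{\mathfrak m}) & \longrightarrow & \Hom_{R_{\mathfrak m}}(M_{\mathfrak m},M_{\mathfrak m}/N_{\mathfrak m})
\end{array}
\]
whose rows are induced by the quotient $M\to M/N$, whose left vertical is an isomorphism by hypothesis~(2), and whose right vertical is injective by the fact above (here $M_{\mathfrak m}/N_{\mathfrak m}=(M/N)_{\mathfrak m}$ by exactness of localization). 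The bottom row is surjective because $M_{\mathfrak m}$ is quasi-projective by~(1) and $M_{\mathfrak m}/N_{\mathfrak m}$ is one of its quotients, so the lifting property applies verbatim. A short chase — lift a class along the surjective bottom row, pull it back through the left isomorphism, then use injectivity of the right vertical to identify images — shows the top row is surjective. As $\mathfrak m$ was arbitrary the global map is onto, and as $N$ was arbitrary $M$ is quasi-projective.

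For the forward implication I would route through Lemma~\ref{sg-comm}: quasi-projectivity makes $M$ projective, hence finitely presented, over $\overline R:=R/\Ann(M)$. Since $\Ann$ commutes with localization for finitely generated modules, $\overline R_{\mathfrak m}\cong R_{\mathfrak m}/\Ann(M_{\mathfrak m})$, and localizing a projective $\overline R$-module gives that $M_{\mathfrak m}$ is projective over $\overline R_{\mathfrak m}$, hence quasi-projective over $R_{\mathfrak m}$ by Lemma~\ref{sg-comm}; this is~(1). For~(2) I use $\End_R(M)=\End_{\overline R}(M)$ and $\End_{R_{\mathfrak m}}(M_{\mathfrak m})=\End_{\overline R_{\mathfrak m}}(M_{\mathfrak m})$, together with finite presentation of $M$ over $\overline R$, which makes $\Hom_{\overline R}(M,M)$ commute with the base change $\overline R\to\overline R_{\mathfrak m}$. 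Note this detour is essential: $M$ need not be finitely presented over $R$ itself (e.g.\ $R/I$ with $I$ non-finitely-generated is quasi-projective by Lemma~\ref{sec:3.1.3}), so~(2) cannot be read off over $R$ directly.

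I expect the obstacle to be conceptual rather than computational: $\Hom_R(M,-)$ does not commute with localization when $M$ is merely finitely generated, so one cannot naively localize the lifting condition defining quasi-projectivity. The whole mechanism is that hypothesis~(2) supplies exactly the bijectivity missing on the source $\End(M)$, while finite generation already guarantees injectivity on the target; reconciling these two asymmetric amounts of control inside the square — and appreciating that local quasi-projectivity \emph{without}~(2) is genuinely insufficient — is the heart of the proof.
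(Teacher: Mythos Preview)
The paper does not supply its own proof of this lemma: it is quoted from \cite[19.2]{Wis2} and \cite{Wis3} and stated without argument, so there is nothing in the paper to compare your attempt against. For what it is worth, your proof is correct. In the reverse direction the diagram chase works exactly as you describe: injectivity of the right vertical (from finite generation of $M$), surjectivity of the bottom row (from local quasi-projectivity), and the hypothesized isomorphism on the left together force surjectivity of the top row, and since surjectivity is local this gives the global lifting. In the forward direction the detour through $\overline R = R/\Ann(M)$ is the right move: finite presentation of $M$ over $\overline R$ (rather than over $R$, where it may fail) is exactly what makes $\Hom_{\overline R}(M,-)$ commute with the flat base change $\overline R \to \overline R_{\mathfrak m}$, and the identification $\overline R_{\mathfrak m}\cong R_{\mathfrak m}/\Ann_{R_{\mathfrak m}}(M_{\mathfrak m})$ (valid because $\Ann$ localizes for finitely generated modules) lets you read both conditions off via Lemma~\ref{sg-comm}.
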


\begin{lemma}\label{sec:3.1.6}
Let $R$ be a local ring and $a,b$ two nonzero elements of $R$ such that $(a)$ and $(b)$ are incomparable. If $(a,b)$ is quasi-projective (in particular, if $R$ is an fqp-ring), then:\\
\1  $(a)\cap (b)=0$,\\
\2  $a^{2}=b^{2}=ab=0$,\\
\3 $\Ann(a)=\Ann(b)$.
\end{lemma}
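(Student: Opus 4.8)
The plan is to work entirely inside the finitely generated module $M:=(a,b)$, which is quasi-projective by hypothesis, and exploit Lemma~\ref{sec:3.1.2} (the Tuganbaev decomposition lemma) together with the local structure theorem for f.g. quasi-projective modules (Theorem~\ref{zanardo}), which tells us $M\cong (R/I)^n$ for some ideal $I$ and some $n\geq 0$. Since $M$ is a $2$-generated ideal that is not cyclic (because $(a)$ and $(b)$ are incomparable, so neither generator suffices), the relevant case should be $n=2$, i.e.\ $M\cong (R/I)^2$. Applying Lemma~\ref{sec:3.1.2} to the decomposition $M=(a)+(b)$, I get endomorphisms $f,g\in\End_R(M)$ with $f+g=1_M$, $f(M)\subseteq (a)$, and $g(M)\subseteq (b)$. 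Evaluating at $a$ and at $b$ and using that $f(b)\in(a)$, $g(a)\in(b)$, $f(a)\in(a)$, $g(b)\in(b)$, I obtain relations of the form $f(a)=a-g(a)$ with $g(a)\in(b)\cap(a)$-ish constraints; the key is to push these through to show $a\in(b)$ or a contradiction unless the intersection is trivial.

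More concretely, for (1): write $f(a)=ra$ and $g(b)=sb$ for suitable $r,s\in R$ (using $f(M)\subseteq(a)$, $g(M)\subseteq(b)$ and that $a,b$ generate their principal ideals), and $f(b)=ua$, $g(a)=vb$. From $f+g=1$ applied to $a$: $ra+vb=a$, so $(1-r)a=vb\in(a)\cap(b)$; applied to $b$: $ua+sb=b$, so $(1-s)b=ua\in(a)\cap(b)$. Now I want to show $(1-r)$ and $(1-s)$ are units, which by localness means showing $r,s\in\mathfrak m$. If instead $r$ were a unit, then $f(a)=ra$ generates $(a)$, and combined with the homomorphism property one should be able to show $g(a)\in(a)$, forcing $vb\in(a)$ and eventually $(b)\subseteq(a)$ after another application of $g+f=1$ — contradicting incomparability. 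The symmetric argument handles $s$. Once $(1-r),(1-s)$ are units, $vb=(1-r)a$ gives $(a)\subseteq(b)$ unless $v\in\mathfrak m$... so actually the cleaner route is: $(a)\cap(b)$ is generated (as it contains $(1-r)a$ which generates $(a)$ if $1-r$ is a unit — but that again contradicts incomparability), so the only consistent possibility is $(a)\cap(b)=0$. I expect the bookkeeping of which coefficients land in $\mathfrak m$ to be the delicate point, and I would organize it as: first rule out $r,s$ being units, then deduce $(1-r)a=0$ and $(1-s)b=0$ directly, which is exactly $(a)\cap(b)=0$ in disguise.

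For (2) and (3): with $(a)\cap(b)=0$ established, apply Lemma~\ref{sec:3.1.2} again, but now the constraint is tighter. From $f(a)+g(a)=a$ with $f(a)\in(a)$ and $g(a)\in(b)\cap((a)+(b))$; but more usefully, consider that $M\cong(R/I)^2$ forces $I\subseteq\Ann(M)=\Ann(a)\cap\Ann(b)$, and $M/\mathfrak mM$ is $2$-dimensional over $R/\mathfrak m$, so $a,b$ is a minimal generating set and any relation $\alpha a+\beta b=0$ has $\alpha,\beta\in\mathfrak m$ (a Nakayama-type statement, valid because $M$ is free over $R/I$ with $I\subseteq\mathfrak m$). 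Apply this to $a\cdot a + 0\cdot b$: if $a^2\neq 0$ we need to produce a relation; instead, observe $a\cdot(a,b)\subseteq(a^2,ab)$ and use quasi-projectivity of $(a,b)$ restricted appropriately, OR: since $(a)\cap(b)=0$, the map $(a)\oplus(b)\to(a,b)$ is an isomorphism, so $(a)$ and $(b)$ are each direct summands of $M\cong(R/I)^2$, hence each is $\cong R/I$ (indecomposable over local $R$), giving $\Ann(a)=I=\Ann(b)$, which is (3). For (2): $(a)\cong R/I$ and $(a)\hookrightarrow M$, while $a\in\mathfrak m M$ forces $a\in\mathfrak m\cdot(a)+\mathfrak m\cdot(b)$; since $a\cdot a\in(a)$ and the copy $(a)\cong R/I$ sits inside $M$ as a summand complementary to $(b)$, multiplication by $a$ kills $R/I$ iff $a\in I=\Ann(a)$, i.e.\ $a^2=0$; similarly $b^2=0$; and $ab\in(a)\cap(b)=0$. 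The main obstacle throughout is the first step — pinning down $n=2$ and extracting the right relations from Lemma~\ref{sec:3.1.2} without circular reasoning about incomparability — after which parts (2) and (3) follow from the summand decomposition $M=(a)\oplus(b)$ with each summand $\cong R/I$.
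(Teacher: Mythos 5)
Your overall plan for part (1) — apply Lemma~\ref{sec:3.1.2} to $M=(a)+(b)$ and analyze the resulting coefficients — is the paper's approach, but your execution has two genuine problems. First, the unit analysis is inverted: from $ra+vb=a$ you get $(1-r)a=vb$, and if $1-r$ were a unit this would give $a\in(b)$, contradicting incomparability; so the correct conclusion is that $r$ \emph{is} a unit (and likewise $s$), not that $r,s\in\mathfrak{m}$ as you set out to prove. Second, and more seriously, even after the unit facts are straightened out, showing that the particular elements $vb=(1-r)a$ and $ua=(1-s)b$ vanish is not ``$(a)\cap(b)=0$ in disguise'': you must handle an \emph{arbitrary} $z\in(a)\cap(b)$. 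The missing step is to write $z=c_1a=c_2b$ and compute $z=f(z)+g(z)=f(c_1a)+g(c_2b)=c_1ra+c_2sb=rz+sz$, whence $\bigl(r-(1-s)\bigr)z=0$; since $r$ is a unit and $1-s\in\mathfrak{m}$, the coefficient is a unit and $z=0$. Without using the two different representations of $z$ inside $f$ and $g$ respectively, the argument does not close.

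For (2) and (3) you take a genuinely different route from the paper, and part of it is sound: once $(a)\cap(b)=0$ gives $M=(a)\oplus(b)$, each summand is a cyclic projective $R/I$-module ($I=\Ann(M)$), hence free of rank one over the local ring $R/I$, so $\Ann(a)=I=\Ann(b)$ — a clean alternative to the paper's argument, which instead uses $(b)$-projectivity of $(a)$ (Lemma~\ref{sec:3.1.4}) and lifts the map $ra\mapsto\overline{rb}$ through $(b)/b\Ann(a)$ to get $f(a)=cb$ with $c$ a unit, yielding both $\Ann(a)=\Ann(b)$ and $b^2=0$. Your argument for (2), however, is broken as written: the assertion $a\in\mathfrak{m}M$ is false ($a$ belongs to a minimal generating set of $M$, so Nakayama forbids it), and you never actually establish $a\in\Ann(a)$. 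The fix is immediate from your own (3): $ab\in(a)\cap(b)=0$ gives $a\in\Ann(b)=\Ann(a)$, hence $a^2=0$, and symmetrically $b^2=0$ — but that one-line deduction, not the $\mathfrak{m}M$ reasoning, is what you need to write down. Also note that pinning down $n=2$ in $M\cong(R/I)^n$ requires an argument (if $M$ were cyclic, its generator would force $(a)\subseteq(b)$ or $(b)\subseteq(a)$), which you assert but do not supply; for your summand argument this is harmless since you only need $(a)$ and $(b)$ to be summands of some $(R/I)^n$, but it should not be left as ``the relevant case should be $n=2$.''
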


\begin{proof}
\1 $I:=(a,b)$ is quasi-projective, so by Lemma~\ref{sec:3.1.2}, there exist $f_1,f_2$ in
End$_R$(I) with $f_1(I)\subseteq (a)$, $f_2(I)\subseteq (b)$, and
$f_1+f_2=1_I$. So $$a=f_1(a)+f_2(a)\ ;\ b=f_1(b)+f_2(b).$$
 Let $f_1(a)=x_1a$, $f_2(a)=y_1b$, $f_1(b)=x_2a$, and $f_2(b)=y_2b$. We obtain
 $$a=x_1a+y_1b\ ;\ b=x_2a+y_2b.$$
This forces $x_1$ to be a unit and $1-y_2$ to not be a unit. Let $z\in (a)\cap (b)$; say, $z=c_1a=c_2b$ for some $c_1, c_2\in R$. We get $$z=f_1(c_1a)+f_2(c_2b)=x_1z+y_2z.$$
Therefore $(x_1-(1-y_2))z=0$, hence $z=0$ (since $x_1-(1-y_2)$ is necessarily a unit), as desired.

\2  We have $I=(a)\oplus (b)$. So $(a)$ is $(b)$-projective by Lemma~\ref{sec:3.1.4}. Consider the following diagram of $R$-maps
\begin{equation*}
\xymatrix{ & & (a) \ar^(.4){g}[d] \ar@{.>}[lld]_{f} & & \\
(b)\ar_(.4){\varphi}[rr] & & \dfrac{(b)}{b\mathrm{Ann}(a)} \ar[r] & 0 & }
\end{equation*}
where $\varphi$ denotes the canonical map and $g$ is (well) defined by $g(ra)=\overline{rb}$. Since $(a)$ is $(b)$-projective, then there exists an $R$-map  $f : (a)\rightarrow(b)$ with $\varphi\circ f=g$. In particular, $\overline{f(a)}=\overline{b}$ (mod $\frac{(b)}{bAnn(a)}$). Therefore $f(a)=cb$ for some $c\in R$ and hence $cb-b=bd$ for some $d\in \Ann(a)$. Further, since $ab=0$ (recall $(a)\cap (b)=0$), we have $0=f(ab)=bf(a)=cb^2$. Multiplying the above equality by $b$, we get $(d+1)b^2=0$. It follows that $b^2=0$ as $d+1$ is a unit (since $d$ is a zero-divisor and $R$ is local). Likewise, $a^2=0$. Thus $I^2=0$, as claimed.

\3 The above equality $cb-b=bd$ yields $(d+1-c)b=0$. Hence the fact that $d+1$ is a unit forces $c$ to be a unit too (since $b\not=0$). Now, let $x\in \Ann(a).$ Then $0=f(xa)=xf(a)=cxb$, whence $x\in \Ann(b)$. So $\Ann(a)\subseteq \Ann(b)$. Likewise, $\Ann(b)\subseteq \Ann(a)$, completing the proof of the lemma. \qed\end{proof}

It is worthwhile noting that Lemma~\ref{sec:3.1.6} sharpens and recovers \cite[Lemma 3]{Moh} and \cite[Lemma 3]{SM} where the authors require the hypothesis that ``every finitely generated ideal is quasi-projective" (i.e., $R$ is an fqp-ring).

\begin{proof}{\it of Theorem~\ref{sec:3.1}}
Assume $R$ to be an arithmetical ring. Let $I$ be a nonzero finitely generated ideal of $R$ and $J$ a subideal of $I$ (possibly equal to 0). Let $P$ be any prime ideal of $R$. Then $I_P:=IR_P$ is a principal  ideal of $R_P$ (possibly equal to $R_P$) and hence quasi-projective by Lemma~\ref{sec:3.1.3}. Moreover, we claim that $$(\Hom_R(I,I))_P\cong\Hom_{R_{P}}(I_P,I_P).$$ We only need to prove $$(\Hom_R(I,I))_P\cong \Hom_{R}(I,I_P).$$ Consider the function
\[
\begin{array}{lrll}

\phi:   &(\Hom_R(I,I))_P      &\longrightarrow    &\Hom_{R}(I,I_P)\\
        &\frac{f}{s}                    &\longrightarrow    &\phi(\frac{f}{s}) : I\rightarrow I_P\ ;\ x\mapsto \frac{f(x)}{s}
\end{array}
\]
Obviously, $\phi$ is a well-defined $R$-map. Moreover, one can easily check that $\phi$ is injective since $I$ is finitely generated. It remains to prove the surjection. Let $g\in \Hom_{R}(I,I_P)$. Clearly, the $R_P$-module $I_P$ is cyclic and so $I_P=aR_P$ for some $a\in I$. Therefore there exists $\lambda\in R$ and $s\in R\setminus P$ such that $g(a)=\frac{\lambda{a}}{s}$. Let $f : I\rightarrow I$ defined by $f(x)=\lambda{x}$. Then $f\in \Hom_R(I,I)$. Let $x\in I$. Further $\frac{x}{1}=\frac{ra}{u}$ for some $r\in R$ and $u\in R\setminus P$, whence $tux=tra$ for some $t\in R\setminus P$. We have
$$\phi(\frac{f}{s})(x)  = \frac{f(x)}{s}
                        =  \frac{\lambda}{s}\frac{{x}}{1}
                        =  \frac{\lambda}{s}\frac{{ra}}{u}
                        = \frac{r}{u}g(a)
                        =  \frac{1}{tu}g(tra)
                        =  \frac{1}{tu}g(tux)
                        =  g(x).$$
This proves the claim. By Lemma~\ref{Wisbauer}, $I$ is quasi-projective and hence $R$ is an fqp-ring, proving the first implication.
Next assume $R$ to be an fqp-ring. The Gaussian notion is a local property, that is, $R$ is Gaussian if and only if $R_{P}$ is Gaussian for every $P\in\Spec(R)$ \cite{BG2}. This fact combined with Lemma~\ref{sec:3.1.5} reduces the proof to the local case. Now, $R$ is a local fqp-ring. Let $a, b$ be any two elements of $R$. We envisage two cases. \emph{Case~1}: Suppose $(a, b)=(a)$ or $(b)$, say, $(a)$. Then $(a, b)^2=(a^2)$ and if in addition $ab=0$, then $ b\in (a)$ implies that $b^2=0$. \emph{Case~2}: Suppose  $I:=(a,b)$ with $I\neq(a)$ and $I\neq(b)$. Obviously, $a\not=0$ and $b\not=0$. By Lemma~\ref{sec:3.1.6}, $I^2=0$. Consequently, both cases satisfy the conditions of \cite[Theorem 2.2(d)]{BG2} and thus $R$ is a  Gaussian ring, proving the second implication.

It remains to show that both implications are, in general, irreversible. This is handled by the next two examples.
\qed\end{proof}

\begin{example}\label{sec:3.2}\rm
There is an example of an fqp-ring that is not arithmetical.
\end{example}

\begin{proof}
From \cite{G3}, consider the local ring $R:=\frac{\F_{2}[x,y]}{(x,y)^2}\cong\F_{2}[\overline{x},\overline{y}]$ with maximal ideal $\mathfrak{m}:=(\overline{x},\overline{y})$. The proper ideals of $R$ are exactly $(0)$, $(\overline{x})$, $(\overline{y})$, $(\overline{x}+\overline{y})$, and $\mathfrak{m}$. By Lemma~\ref{sec:3.1.3}, $(\overline{x})$, $(\overline{y})$, and $(\overline{x}+\overline{y})$ are quasi-projective. Further $\mathfrak{m}:=(\overline{x})\oplus(\overline{y})$ implies that $\mathfrak{m}$ is quasi-projective by Lemma~\ref{sec:3.1.4}. Hence $R$ is an fqp-ring. Clearly, $R$ is not an arithmetical ring since $\mathfrak{m}$ is not principal.
\qed\end{proof}

\begin{example}\label{sec:3.3}\rm
There is an example of a Gaussian ring that is not an fqp-ring.
\end{example}

\begin{proof}
Let $\K$ be a field and consider the local Noetherian ring $R:=\frac{\K[x,y]}{(x^2,,xy,y^3)}\cong\K[\overline{x},\overline{y}]$ with maximal ideal $\mathfrak{m}:=(\overline{x},\overline{y})$. One can easily verify that $\Ann(\mathfrak{m})=(\overline{x},\overline{y}^{2})$ and then $\frac{R}{\Ann(\mathfrak{m})}\cong \frac{\K[y]}{(y^2)}$. Therefore $\frac{R}{\Ann(\mathfrak{m})}$ is a principal and hence an arithmetical ring. It follows that $R$ is a Gaussian ring (see first paragraph right after Theorem 2.2 in \cite{BG2}). Finally, we claim that $\mathfrak{m}$ is not quasi-projective. Deny. Since $\mathfrak{m}=(\overline{x},\overline{y})$ with $\mathfrak{m}\neq(\overline{x})$ and $\mathfrak{m}\neq(\overline{y})$, then Lemma~\ref{sec:3.1.6} yields $\mathfrak{m}^2=0$, absurd. Thus $R$ is a not an fqp-ring, as desired.
\qed\end{proof}

Next, in view of Theorem~\ref{sec:3.1} and Example~\ref{sec:3.2}, we extend Osofsky's theorem on the weak global dimension of arithmetical rings to the class of fqp-rings.

\begin{theorem}\label{sec:3.4}
The weak global dimension of an fqp-ring is equal to 0, 1, or $\infty$.
\end{theorem}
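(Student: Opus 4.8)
The plan is to reduce to the local case and then split according to whether the ring is arithmetical. Since the weak global dimension is a local invariant, $\wdim R=\sup\{\wdim R_{\mathfrak m}\mid\mathfrak m\in\Max(R)\}$, and since each $R_{\mathfrak m}$ is again an fqp-ring by Lemma~\ref{sec:3.1.5}, it suffices to prove the theorem when $(R,\mathfrak m)$ is local. In that setting I would argue: if $R$ is arithmetical then, over a local ring, every finitely generated ideal is principal (a chained ring), and Osofsky's theorem \cite{Os} applies verbatim to give $\wdim R\in\{0,1,\infty\}$; so the whole problem is concentrated in the case where $R$ is \emph{not} arithmetical, and there I would prove the sharper statement $\wdim R=\infty$.

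So assume $(R,\mathfrak m)$ is a non-arithmetical local fqp-ring. Then not every finitely generated ideal is principal, and an immediate induction on the number of generators shows that some two-generated ideal fails to be principal; thus there are $a,b\in\mathfrak m$, both nonzero, with $(a)$ and $(b)$ incomparable. Lemma~\ref{sec:3.1.6} then gives $(a)\cap(b)=0$, $a^{2}=b^{2}=ab=0$, and $\Ann(a)=\Ann(b)=:\mathfrak a$; in particular $R$ is non-reduced. The goal is now purely homological: produce a cyclic $R$-module of infinite flat dimension.

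The model situation, which I would handle first, is $\mathfrak m^{2}=0$ (this is exactly what happens in Example~\ref{sec:3.2} and, via Theorem~\ref{sec:4.3}, in the trivial-extension examples). There $\Ann(x)=\mathfrak m$ for every nonzero $x\in\mathfrak m$, so $aR\cong R/\mathfrak m=:k$, and the first syzygy of $k$ is $\mathfrak m$ itself. Since $\mathfrak m^{2}=0$, $\mathfrak m$ is a $k$-vector space, hence a direct sum of copies of $k$; consequently every syzygy of $k$ is again a direct sum of copies of $k$. As $k$ is not flat — any nonzero finitely generated subideal $J\subseteq\mathfrak m$ gives $\operatorname{Tor}^{R}_{1}(k,R/J)=J/\mathfrak m J=J\ne 0$ — no syzygy of $k$ is flat, so $\fd_{R}(k)=\infty$ and $\wdim R=\infty$.

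The main obstacle will be to carry this through when $\mathfrak m^{2}\ne 0$. Here I would compute $\operatorname{Tor}^{R}_{i}(R/aR,R/aR)$ directly. Using that $aR\subseteq\mathfrak a$, that $aR\cong R/\mathfrak a$, and that $a\mathfrak a=0$ (all immediate from $a^{2}=0$ and the definition of $\mathfrak a$), the free presentation $0\to\mathfrak a\to R\to aR\to 0$ yields with no difficulty $\operatorname{Tor}^{R}_{1}(R/aR,R/aR)=\operatorname{Tor}^{R}_{2}(R/aR,R/aR)=aR\ne 0$. Propagating nonvanishing to all $i\ge 1$ reduces to controlling a free cover of $\mathfrak a$, which need not be finitely generated, and to ruling out that $\mathfrak a$ is idempotent; this is precisely where the fqp hypothesis must be exploited, since by Theorem~\ref{zanardo} every finitely generated ideal of the local ring $R$ — in particular $(a,b)=(a)\oplus(b)\cong(R/\mathfrak a)^{2}$ and the finitely generated subideals of $\mathfrak a$ — has the rigid form $(R/I)^{n}$, and a further application of Lemma~\ref{sec:3.1.6} to two-generated subideals of $\mathfrak a$ should pin the syzygies down tightly enough to conclude. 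I expect this bookkeeping to be the only genuinely delicate point; once $\wdim R=\infty$ is established in the non-arithmetical case, combining with the arithmetical case gives $\wdim R\in\{0,1,\infty\}$ in general.
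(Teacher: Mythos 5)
Your reduction to the local case, your dispatch of the chained (arithmetical) case via Osofsky \cite{Os}, and your treatment of the model case $\mathfrak{m}^{2}=0$ are all correct; the last of these is a clean, self-contained argument, and the sharper claim you aim for (a non-arithmetical local fqp-ring has $\wdim(R)=\infty$) is in fact true. The genuine gap is exactly where you locate it: the non-arithmetical local case with $\mathfrak{m}^{2}\neq 0$. Your computation gives $\operatorname{Tor}^{R}_{1}(R/aR,R/aR)\cong\operatorname{Tor}^{R}_{2}(R/aR,R/aR)\cong aR\neq0$ correctly, but the next step is $\operatorname{Tor}^{R}_{3}(R/aR,R/aR)\cong\operatorname{Tor}^{R}_{1}(\mathfrak{a},R/aR)$ with $\mathfrak{a}=\Ann(a)$, and this requires controlling a presentation of $\mathfrak{a}$, which need not be finitely generated. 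Theorem~\ref{zanardo} and Lemma~\ref{sec:3.1.6} only constrain \emph{finitely generated} ideals, and you offer no mechanism for passing from two-generated subideals of $\mathfrak{a}$ to the syzygies of $\mathfrak{a}$ itself; ``propagating nonvanishing to all $i\geq1$'' is precisely the content of the theorem in this case, and it is left unproved. As written, the proposal establishes the result only when $R$ is chained or $\mathfrak{m}^{2}=0$.

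The paper closes this case with two imported results rather than a direct Tor computation. First, a structure theorem of Singh and Mohammad (Lemma~\ref{sec:3.4.0}): a local fqp-ring either is a chained ring or satisfies $\Nil(R)^{2}=0$. This is strictly stronger than the conclusion $a^{2}=b^{2}=ab=0$ you extract from Lemma~\ref{sec:3.1.6} --- it makes the \emph{entire} nilradical square-zero, not merely the particular elements witnessing non-chainedness --- and it is the structural input your sketch is missing. Second, Bazzoni--Glaz's result \cite[Theorem 6.4]{BG2} that a non-reduced local Gaussian ring with square-zero nilradical has infinite weak global dimension; combined with Theorem~\ref{sec:3.1} (fqp $\Rightarrow$ Gaussian), this yields $\wdim(R)=\infty$ in your hard case. (The paper also treats the reduced local case separately via \cite[Theorem 2.2]{G2}; your dichotomy absorbs it, since a reduced local fqp-ring is chained by Lemma~\ref{sec:3.1.6}.) To repair your argument, either cite these two results as the paper does, or first prove the Singh--Mohammad dichotomy and only then attempt the homological computation with the full hypothesis $\Nil(R)^{2}=0$ in hand; without that dichotomy the bookkeeping you defer does not go through.
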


The proof uses the following result.

\begin{lemma}[{\cite[Theorem 2]{SM}}]\label{sec:3.4.0}
Let $R$ be a local fqp-ring. Then either $\Nil(R)^{2}=0$ or $R$ is a chained ring {\rm (}i.e., its ideals are linearly ordered with respect to inclusion{\rm )}.\qed
\end{lemma}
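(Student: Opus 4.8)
The plan is to prove the contrapositive of one horn of the dichotomy: assuming $R$ is a local fqp-ring that is \emph{not} chained, I will show $\Nil(R)^2=0$ (if $R$ is chained, the second alternative holds and there is nothing to prove). Since $R$ is not chained, there are nonzero $a,b\in R$ with $(a)$ and $(b)$ incomparable, and because $R$ is an fqp-ring the ideal $(a,b)$ is quasi-projective, so Lemma~\ref{sec:3.1.6} applies and yields $a^2=b^2=ab=0$, $(a)\cap(b)=0$, and $A:=\Ann(a)=\Ann(b)$. These square-zero relations attached to the incomparable pair are the engine of the whole argument.

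First I would reduce the global statement to a statement about single elements. To prove $\Nil(R)^2=0$ it suffices to show $xy=0$ for all $x,y\in\Nil(R)$. If $(x)$ and $(y)$ are incomparable (and both nonzero) then $(x,y)$ is quasi-projective, and Lemma~\ref{sec:3.1.6} gives $xy=0$ outright; if they are comparable, say $y=xw$, then $xy=x^2w$, which vanishes as soon as $x^2=0$. Hence the lemma reduces to the claim that \emph{every nilpotent element $x$ satisfies $x^2=0$}.

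Next I would attack this claim by comparing the principal ideal $(x)$ of a nilpotent $x$ with $(a)$ and with $(b)$. If $(x)$ is incomparable to $(a)$ or to $(b)$, Lemma~\ref{sec:3.1.6} immediately gives $x^2=0$; if $x\in(a)$ or $x\in(b)$, then $x^2\in(a^2)=0$ or $x^2\in(b^2)=0$. Discarding the configurations that either force $x^2=0$ or would make $(a),(b)$ comparable, the only surviving ``hard case'' is $a,b\in(x)$, say $a=x\alpha$ and $b=x\beta$ with $\alpha,\beta$ non-units. A useful preliminary reduction here is to pass to $\overline R:=R/\Ann(x)$: the ideal $(\overline\alpha,\overline\beta)$ of $\overline R$ is isomorphic as a module to $(a,b)$ (it is annihilated by $\Ann(x)$), hence quasi-projective over $\overline R$, and its two generators are again incomparable; applying Lemma~\ref{sec:3.1.6} now in $\overline R$ gives $\overline\alpha^2=\overline\beta^2=\overline\alpha\,\overline\beta=0$, i.e. $\alpha,\beta\in\Ann(a)=A$.

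The hard part will be closing this last configuration and showing $x^2=0$ there. My intended mechanism is strong induction on the nilpotency index of $x$: comparing $(x)$ with $(\alpha)$ and $(\beta)$ either produces a fresh incomparability that kills $x^2$ through Lemma~\ref{sec:3.1.6}, or pushes the data into $(x^2)$ (so that $a,b\in(x^2)$ with $x^2$ of strictly smaller index, where the inductive hypothesis applies), or pushes it upward into a larger principal ideal carrying the same incomparable configuration. Controlling this descent/ascent—in particular guaranteeing termination in the absence of any chain condition, and handling the ascent, where the new generator need not be nilpotent—is the crux of the proof and is exactly the technical content of \cite[Theorem 2]{SM}. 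Throughout, the Gaussian identity supplied by Theorem~\ref{sec:3.1} (in the local form ``$(u,v)^2=(u^2)$ or $(v^2)$'' together with ``$uv=0\Rightarrow u^2=0$ or $v^2=0$'') is available as an auxiliary tool to pin down the remaining products.
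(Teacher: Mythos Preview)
The paper does not prove this lemma: it is quoted from \cite[Theorem~2]{SM} and closed with a \qed, so there is no in-paper argument to compare your attempt against. The only question is whether your outline actually closes.

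Your reduction to ``every nilpotent $x$ has $x^{2}=0$'' and the trichotomy comparing $(x)$ with $(a)$ and $(b)$ are correct; the gap lies entirely in the ``hard case'' $a,b\in(x)$, and it is created by an unnecessary detour. You pass to $\overline R=R/\Ann(x)$ and extract only $\alpha^{2}x=0$. But $(\alpha)$ and $(\beta)$ are already incomparable \emph{in $R$ itself}: if $\alpha=r\beta$ then $a=x\alpha=r\,x\beta=rb\in(b)$, contradicting the incomparability of $(a),(b)$. Since $R$ is fqp, Lemma~\ref{sec:3.1.6} applied in $R$ to $(\alpha,\beta)$ gives the stronger conclusion $\alpha^{2}=\beta^{2}=0$. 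With this in hand both obstacles you flag disappear. The ``ascent'' case $x\in(\alpha)$ is immediate, since $x^{2}\in(\alpha^{2})=0$; no non-nilpotent new generator ever needs to be handled. In the ``descent'' case $\alpha,\beta\in(x)$ one gets $a,b\in(x^{2})$; your instinct to invoke an inductive hypothesis on $x^{2}$ would only yield $x^{4}=0$, which is not enough. Instead write $a=x^{2}\alpha_{2}$, $b=x^{2}\beta_{2}$, observe that $(\alpha_{2}),(\beta_{2})$ are again incomparable in $R$ by the same one-line check, hence $\alpha_{2}^{2}=\beta_{2}^{2}=0$, and compare them \emph{again with $(x)$} (not with $(x^{2})$): either $x^{2}=0$ or $a,b\in(x^{3})$. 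Iterating forces $a,b\in(x^{k})$ for every $k$ unless $x^{2}=0$ along the way; nilpotency of $x$ then gives $a=0$, the desired contradiction. Thus the argument terminates in finitely many steps without any chain condition and without deferring to \cite{SM}.
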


\begin{proof}{\it of Theorem~\ref{sec:3.4}}
Since $\wdim(R)=\sup\{\wdim(R_{\mathfrak{m}})\mid \mathfrak{m}\in\Max(R)\}$, we only need to prove the theorem for the local case. Let $R$ be a local fqp-ring. We envisage two cases. \emph{Case~1}: Suppose $R$ is reduced. Then Theorem~\ref{sec:3.1} combined with \cite[Theorem 2.2]{G2} forces the weak global dimension of $R$ to be less than or equal to one, as desired.  \emph{Case~2}: Suppose $R$ is not reduced. By Lemma~\ref{sec:3.4.0}, $(\Nil(R))^{2}=0$ or $R$ is a chained ring. By Theorem~\ref{sec:3.1}, $R$ is Gaussian, so that the statement  ``$(\Nil(R))^{2}=0$" yields $\wdim(R)=\infty$ by \cite[Theorem 6.4]{BG2}. On the other hand, the statement ``$R$ is a chained ring" implies that $R$ is a local arithmetical ring with zero-divisors (since $\Nil(R)\not=0$), hence $R$ has an infinite weak global dimension (Osofsky \cite{Os}), completing the proof of the theorem.
\qed\end{proof}

In 2005, Glaz proved that Osofsky's result is valid in the class of coherent Gaussian rings \cite[Theorem 3.3]{G2}. In 2007, Bazzoni and Glaz conjectured that the same must hold in the whole class of Gaussian rings \cite{BG2}. Theorem~\ref{sec:3.4} widens the scope of validity of this conjecture beyond the class of coherent Gaussian rings, as shown by next example:

\begin{example}\label{sec:3.4.1}\rm
There is an example of an fqp-ring that is neither arithmetical nor coherent.
\end{example}

\begin{proof}
Let $\K$ be field and $\{x_{1}, x_{2}, . . . \}$ an infinite set of indeterminates over $\K$. Let $R:=\frac{\K[x_{1}, x_{2}, . . . ]}{\mathfrak{m}^{2}} =\K[\overline{x_{1}}, \overline{x_{2}}, . . . ]$, where $\mathfrak{m}:=(x_{1}, x_{2}, . . . )$. One can easily check that $R$ has the following features:
\begin{enumerate}
\item $R=\K+\frac{\mathfrak{m}}{\mathfrak{m}^{2}}$ is local with maximal ideal $\frac{\mathfrak{m}}{\mathfrak{m}^{2}}$.

\item $\forall\ f\in \frac{\mathfrak{m}}{\mathfrak{m}^{2}}$, $\Ann(f)= \frac{\mathfrak{m}}{\mathfrak{m}^{2}}$.

\item $\forall\ i\not=j$, $(\overline{x_{i}})\cap (\overline{x_{j}})=0$.

\item $\forall\ f\in \frac{\mathfrak{m}}{\mathfrak{m}^{2}}$ and $\forall\ i\geq1$, $(f)\cong (\overline{x_{i}})$.

\item For every finitely generated ideal $I$ of $R$, we have $I\cong\bigoplus_{k=1}^{n}(\overline{x_{i_{k}}})$ for some indeterminates $x_{i_{1}}, . . ., x_{i_{n}}$ in  $\{x_{1}, x_{2}, . . . \}$.
\end{enumerate}

Let $I$ be a finitely generated ideal of $R$. By (4), $(\overline{x_{i}})$ is $(\overline{x_{j}})$-projective for all $i, j\geq1$. So (5) forces $I$ to be quasi-projective by Lemma~\ref{sec:3.1.4}. Therefore $R$ is an fqp-ring. Moreover, by (2), the following sequence of natural homomorphisms
$$0 \rightarrow \frac{\mathfrak{m}}{\mathfrak{m}^{2}}\rightarrow R\rightarrow  R\overline{x_{1}} \rightarrow 0$$
is exact. So $R\overline{x_{1}}$ is not finitely presented and hence $R$ is not coherent. Finally, observe that $R\overline{x_{1}}$ and $R\overline{x_{2}}$ are incomparable so that $R$ is not a chained ring and, hence, not an arithmetical ring (recall $R$ is local).
\qed\end{proof}

In \cite{BG2}, Bazzoni and Glaz proved that a Pr\"ufer ring $R$ satisfies any of the other four Pr\"ufer conditions (mentioned in the introduction) if and only if its total ring of quotients $Q(R)$ satisfies that same condition. This fact narrows the scope of study of the Pr\"ufer conditions to the class of total rings of quotients; specifically, ``a Pr\"ufer ring is Gaussian (resp., is arithmetical, has $\wdim(R)\leq1$, is semihereditary) if and only if so is $Q(R)$" \cite[Theorems 3.3 \& 3.6 \& 3.7 \& 3.12]{BG2}. Next, we establish an analogue for the fqp-property in the local case.

\begin{theorem}\label{sec:3.5}
Let $R$ be a local ring. Then $R$ is Pr\"ufer and $Q(R)$ is an fqp-ring if and only if $R$ is an fqp-ring.
\end{theorem}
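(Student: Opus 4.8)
The plan is to prove each direction separately, exploiting the local hypothesis heavily. For the forward direction, assume $R$ is a local Pr\"ufer ring with $Q:=Q(R)$ an fqp-ring. Since $R$ is local, $Q$ is obtained by localizing at the (saturated) multiplicatively closed set of regular elements, so a finitely generated ideal $I$ of $R$ must be handled according to whether it is regular or not. If $I$ is regular, then $I$ is invertible (being finitely generated and regular over the Pr\"ufer ring $R$, which is local, hence $I$ is principal generated by a regular element), so $I\cong R$ is quasi-projective. If $I$ is not regular, every element of $I$ is a zero-divisor; the idea is to show $I$ ``sees'' only the zero-divisor structure, which lives inside (a quotient governed by) $Q$. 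More precisely, one shows $\Ann_R(I)=\Ann_Q(IQ)\cap R$ in a way that makes $R/\Ann_R(I)$ embed into $Q/\Ann_Q(IQ)$ as a localization, and then transfers quasi-projectivity of $IQ$ over $Q$ down to $I$ over $R$ using Lemma~\ref{sg-comm}: $IQ$ projective over $Q/\Ann_Q(IQ)$ should force $I$ projective over $R/\Ann_R(I)$ by a faithfully-flat or localization descent argument analogous to Lemma~\ref{sec:3.1.5} run in reverse. The key point I expect to need is that, for a non-regular finitely generated ideal $I$ of a local Pr\"ufer ring, the natural map $I\to IQ$ is injective and $I\otimes_R Q\cong IQ$, together with a comparison of endomorphism rings as in Lemma~\ref{Wisbauer}.

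For the converse, assume $R$ is a local fqp-ring. By Theorem~\ref{sec:3.1}, $R$ is Gaussian, hence Pr\"ufer, so the Pr\"ufer half is immediate. It remains to show $Q:=Q(R)$ is an fqp-ring. I would first check that $Q$ is itself local (it is a localization of the local ring $R$) and then take a finitely generated ideal $J$ of $Q$; write $J=IQ$ for a finitely generated ideal $I$ of $R$ (every finitely generated ideal of a localization has this form). We know $I$ is quasi-projective over $R$, i.e.\ $I$ is projective over $R/\Ann_R(I)$ by Lemma~\ref{sg-comm}. Localizing at the regular elements and using $S^{-1}\Ann_R(I)=\Ann_{Q}(IQ)$ (compatibility of annihilators with flat localization for finitely generated modules), we get that $IQ$ is projective over $Q/\Ann_Q(IQ)$, hence quasi-projective over $Q$ by Lemma~\ref{sg-comm} again. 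This is essentially the argument of Lemma~\ref{sec:3.1.5} applied with $S$ the set of regular elements, so the converse should go through cleanly.

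The main obstacle will be the forward direction: descending quasi-projectivity from $Q$ to $R$ for a non-regular finitely generated ideal $I$. The subtlety is that $R\to Q$ is a localization but not faithfully flat in general, so projectivity over $Q/\Ann_Q(IQ)$ does not automatically descend. The resolution I anticipate is to use the local structure together with Lemma~\ref{sec:3.1.6} (or Lemma~\ref{sec:3.4.0} and the chained/Nil$^2=0$ dichotomy): in a local Pr\"ufer ring, a two-generated non-regular ideal $(a,b)$ with $(a)$ and $(b)$ incomparable already forces strong relations ($a^2=b^2=ab=0$, $\Ann(a)=\Ann(b)$) once one knows the corresponding ideal $(a/1,b/1)$ behaves well in $Q$; conversely, if all finitely generated ideals of $Q$ are quasi-projective, these relations propagate back to $R$ because any element of a non-regular ideal becomes, after clearing a regular denominator, an element already present in $R$. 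So the technical heart is a careful bookkeeping argument showing that the obstruction to quasi-projectivity of a finitely generated ideal of $R$ is detected entirely in $Q$ when $R$ is Pr\"ufer and local. I would organize the non-regular case around Lemma~\ref{sec:3.1.4} and Lemma~\ref{sec:3.1.2}, reducing to two generators and invoking the structural identities, mirroring the proof of Theorem~\ref{sec:3.1}.
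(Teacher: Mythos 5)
Your converse direction (local fqp $\Rightarrow$ Pr\"ufer and $Q(R)$ fqp) is correct and is exactly what the paper does: Theorem~\ref{sec:3.1} gives Gaussian, hence Pr\"ufer, and Lemma~\ref{sec:3.1.5} applied with $S$ the set of regular elements gives that $Q(R)$ is fqp. (One small slip there: a localization of a local ring need not be local, so ``$Q$ is local because it is a localization of the local ring $R$'' is not a valid inference; fortunately localness of $Q$ is not needed for that direction.)

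The forward direction, however, has a genuine gap, and you partly flag it yourself: your primary route --- descending projectivity along $R/\Ann_R(I)\rightarrow Q/\Ann_Q(IQ)$ --- fails because this map is not faithfully flat, and your fallback (``a careful bookkeeping argument\dots reducing to two generators and invoking the structural identities'') is never actually carried out. Two concrete ingredients are missing. First, before Lemma~\ref{sec:3.1.6} can be applied inside $Q(R)$ one must know $Q(R)$ is \emph{local}: the paper gets this because $R$ Pr\"ufer with $Q(R)$ Gaussian forces $R$ Gaussian (Bazzoni--Glaz), hence the primes of $R$ are linearly ordered, $\Ze(R)$ is prime, and $Q(R)=R_{\Ze(R)}$. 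Second --- the technical heart --- for a non-regular $I$ one takes a \emph{minimal} generating set $\{x_1,\dots,x_n\}$ and proves that $(x_i/1)$ and $(x_j/1)$ are \emph{incomparable} in $Q(R)$ for $i\neq j$: if $sx_i=ax_j$ with $s$ regular, the Pr\"ufer property makes $(a,s)$ projective, Lemma~\ref{sec:3.1.6} forces $(a,s)=(a)$ or $(s)$ with $a$ necessarily regular, and then $x_i$ and $x_j$ are linearly dependent, contradicting minimality. Only after this can Lemma~\ref{sec:3.1.6}, applied in the local fqp-ring $Q(R)$, yield $(x_i/1)\cap(x_j/1)=0$ and $\Ann_{Q(R)}(x_i/1)=\Ann_{Q(R)}(x_j/1)$; clearing the regular denominators gives $x_iR\cap x_jR=0$ and $\Ann(x_i)=\Ann(x_j)$, so $I=\bigoplus_i x_iR$ with $x_iR\cong x_jR$, and Lemma~\ref{sec:3.1.4} (not a reduction to two generators) finishes. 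Without the minimality/incomparability step there is no way to bring Lemma~\ref{sec:3.1.6} to bear, so the argument as proposed does not close.
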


\begin{proof}
A Gaussian ring is Pr\"ufer \cite[Theorem 3.4.1]{G3} and \cite[Theorem 6]{Lu}. So in view of Theorem~\ref{sec:3.1} and Lemma~\ref{sec:3.1.5} only the necessity has to be proved. Assume $R$ is Pr\"ufer and $Q(R)$ is an fqp-ring. Notice first that $R$ is a (local) Gaussian ring by
\cite[Theorem 3.3]{BG2} and hence the lattice of its prime ideals is linearly ordered \cite{T}. Therefore the set of zero-divisors $\Ze(R)$ of $R$ is a prime ideal and hence $Q(R)=R_{\Ze(R)}$ is local. Next, let $S$ denote the set of all regular elements of $R$ and let $I$ be a finitely generated ideal of $R$ with a minimal generating set $\{x_1, \ldots, x_n\}$. If $I$ is regular, then $I$ is projective (since $R$ is Pr\"{u}fer). Suppose $I$ is not regular, that is, $I\cap S=\emptyset$. We wish to show that $I$ is quasi-projective. We first claim that
$$\big(\frac{x_{i}}{1}\big)Q(R)\cap \big(\frac{x_{j}}{1}\big)Q(R)=0,\ \forall\ i\not=j\in\{1, \ldots, n\}.$$
Indeed, for any $i\neq j$, the ideals $(\frac{x_{i}}{1})$ and $(\frac{x_{j}}{1})$ are incomparable in $Q(R)$: Otherwise if, say, $\frac{x_{i}}{1}\in (\frac{x_{j}}{1})$, then $sx_{i}= ax_{j}$ for some $a\in R$ and $s\in S$. Since $s$ is regular, the ideal $(a,s)$ is projective in $R$ (which is Pr\"{u}fer). Moreover, by Lemma~\ref{sec:3.1.6}, we obtain $(a,s)=(s)$ or $(a,s)=(a)$ and, in this case, necessarily $a\in S$. It follows that $x_{i}$ and
$x_{j}$ are linearly dependent which contradict minimality. Therefore, by Lemma~\ref{sec:3.1.6} applied to the ideal $(\frac{x_{i}}{1}, \frac{x_{j}}{1})$ in the local fqp-ring $Q(R)$, we get $(\frac{x_{i}}{1})\cap (\frac{x_{j}}{1})=0$, proving the claim. Since $S$ consists of regular elements, then $x_{i}R\cap x_{j}R=0$, for each $i\not=j$, whence $I=\bigoplus_{i=1}^{n} x_{i}R$. Further, by Lemma~\ref{sec:3.1.6}, we have $$\Ann_{Q(R)}\big(\dfrac{x_{i}}{1}\big)=\Ann_{Q(R)}\big(\frac{x_{j}}{1}\big),\ \forall\ i\not=j\in\{1, \ldots, n\}.$$
Therefore, we obtain $$\Ann(x_{i})=\Ann(x_{j}),\ \forall\ i\not=j\in\{1, \ldots, n\}.$$
Consequently, $x_{i}R\cong x_{j}R$ and hence $x_{i}R$ is $x_{j}R$-projective for all $i, j$. Once again, we appeal to  Lemma~\ref{sec:3.1.4} to conclude that $I$ is quasi-projective, as desired.
\qed\end{proof}

The global case holds for coherent rings as shown next.

\begin{corollary}\label{sec:3.6}
Let $R$ be a coherent ring. Then $R$ is Pr\"ufer and $Q(R)$ is an fqp-ring if and only if $R$ is an fqp-ring.
\end{corollary}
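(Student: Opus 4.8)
The plan is to reduce the coherent global statement to the local case already established in Theorem~\ref{sec:3.5}, using the fact that coherence and the Pr\"ufer/fqp/Gaussian conditions all localize well. Specifically, I would argue both directions by localizing at an arbitrary maximal ideal $\mathfrak{m}$ of $R$ and invoking Theorem~\ref{sec:3.5} for the local ring $R_{\mathfrak{m}}$, so the whole work is to translate between $Q(R)$ and the family $\{Q(R_{\mathfrak{m}})\}_{\mathfrak{m}\in\Max(R)}$, and between "$R$ fqp" and "$R_{\mathfrak{m}}$ fqp for all $\mathfrak{m}$."

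For the nontrivial direction, suppose $R$ is coherent, Pr\"ufer, and $Q(R)$ is an fqp-ring. I would first note that $R$ is Gaussian: indeed, Bazzoni--Glaz show a coherent Pr\"ufer ring is Gaussian exactly when $Q(R)$ is Gaussian, and $Q(R)$ being fqp is Gaussian by Theorem~\ref{sec:3.1}; alternatively, one checks Gaussian-ness locally. For each maximal ideal $\mathfrak{m}$, the localization $R_{\mathfrak{m}}$ is coherent (localization of a coherent ring at a prime is coherent), local, and Pr\"ufer (the Pr\"ufer property is local). The key step is to identify $Q(R_{\mathfrak{m}})$ with a localization of $Q(R)$: since $\Ze(R)$ is prime in a Gaussian ring and $R$ is coherent, one has control over how the regular elements behave, and $Q(R_{\mathfrak{m}}) = R_{\mathfrak{m}} \otimes_R Q(R)$ up to the appropriate localization, so $Q(R_{\mathfrak{m}})$ is a localization of $Q(R)$ and hence an fqp-ring by Lemma~\ref{sec:3.1.5}. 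Then Theorem~\ref{sec:3.5} gives that $R_{\mathfrak{m}}$ is an fqp-ring for every $\mathfrak{m}$. To conclude that $R$ itself is an fqp-ring, I would take a finitely generated ideal $I$ of $R$ and show it is quasi-projective via Lemma~\ref{Wisbauer}: each $I_{\mathfrak{m}}$ is quasi-projective over $R_{\mathfrak{m}}$ since $R_{\mathfrak{m}}$ is fqp, and the endomorphism-ring localization condition $(\End(I))_{\mathfrak{m}} \cong \End_{R_{\mathfrak{m}}}(I_{\mathfrak{m}})$ holds here precisely because $R$ is coherent (so $I$ is finitely presented, and $\Hom$ commutes with localization).

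The converse is the easy direction: if $R$ is an fqp-ring, then $R$ is Gaussian (Theorem~\ref{sec:3.1}), hence Pr\"ufer by \cite[Theorem 3.4.1]{G3}, and $Q(R)$ is an fqp-ring by Lemma~\ref{sec:3.1.5} since $Q(R)$ is a localization of $R$ — coherence of $R$ is not even needed here.

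I expect the main obstacle to be the identification of $Q(R_{\mathfrak{m}})$ as a localization of $Q(R)$ (or at least as an fqp-ring), since "total ring of quotients" does not commute with arbitrary localization in general; coherence plus the Gaussian hypothesis (forcing $\Ze(R)$ prime, as in the proof of Theorem~\ref{sec:3.5}) is exactly what is needed to make regular elements of $R$ stay regular after localizing and to pin down $Q(R_{\mathfrak{m}})$. The secondary technical point is verifying the endomorphism-ring hypothesis of Lemma~\ref{Wisbauer} for a finitely generated ideal of the coherent ring $R$, which is where coherence enters a second time; this should follow from finite presentation of $I$ together with the standard fact that $\Hom$ out of a finitely presented module commutes with localization.
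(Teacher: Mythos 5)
Your proposal follows essentially the same route as the paper: establish Gaussianness, localize at each prime/maximal ideal, identify $Q(R_{\mathfrak{m}})$ as a localization of $Q(R)$ (the paper cites \cite[Theorem 3.4]{BG2} for exactly this point) so that Theorem~\ref{sec:3.5} applies locally, and then use finite presentation (coherence) to pass from locally quasi-projective to quasi-projective. The only cosmetic difference is in that last gluing step, where the paper invokes Faticoni's theorem \cite[Theorem 2]{Fat} while you use Lemma~\ref{Wisbauer} together with the compatibility of $\Hom$ with localization for finitely presented modules; both are valid and amount to the same local-global principle.
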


\begin{proof}
Here too only necessity has to be proved. Assume $R$ is Pr\"ufer and $Q(R)$ is an fqp-ring and let $I$ be a finitely generated ideal of $R$.
By \cite[Theorem 3.3]{BG2}, $R$ is Gaussian. Let $P$ be a prime ideal of $R$. Then $R_{P}$ is a local Pr\"ufer ring (since Gaussian). Moreover, by \cite[Theorem 3.4]{BG2}, the total ring of quotients of $R_{P}$ is a localization of Q(R) (with respect to a multiplicative subset of R) and hence an fqp-ring by Lemma~\ref{sec:3.1.5}. By Theorem~\ref{sec:3.5}, $R_{P}$ is an fqp-ring. Consequently, $I$ is locally quasi-projective. Since $I$ is finitely presented, then $I$ is quasi-projective \cite[Theorem 2]{Fat}, as desired.
\qed\end{proof}

We close this section with a discussion of the global case. Recall first that the Gaussian and arithmetical properties are local, i.e.,
$R$ is Gaussian (resp., arithmetical) if and only if $R_{\mathfrak{m}}$ is Gaussian (resp., arithmetical) for every maximal ideal $\mathfrak{m}$ of $R$. The same holds for rings with weak global dimension $\leq1$. We were not able to prove or disprove this fact for fqp-rings. Moreover, the transfer result \cite[Theorem 3.12(i)]{BG2} for the semihereditary notion (which is not a local property) was made possible by Endo's result that ``a total ring of quotients is semihereditary if and only if it is von Neumann regular" \cite{E}. No similar phenomenon occurs for the fqp-property; namely, a total ring of quotients that is an fqp-ring is not necessarily arithmetical (see Example~\ref{sec:3.2}). Based on the above discussion, one wonders if Theorem~\ref{sec:3.5} is true for all rings. We have not succeeded to prove this fact.

\section{Examples via trivial ring extensions}\label{sec:4}

This section studies the fqp-property in various trivial ring extensions. Our objective is to generate new
and original examples to enrich the current literature with new families of fqp-rings. It is worthwhile noticing that trivial extensions have been thoroughly investigated in \cite{BKM} for the other five Pr\"ufer conditions (mentioned in the introduction).

Let $A$ be a ring and $E$ an $A$-module. The trivial (ring) extension of $A$ by $E$ (also called the idealization
of $E$ over $A$) is the ring $R:= A\ltimes~E$ whose underlying group is $A \times E$ with multiplication given by $(a_{1}, e_{1})(a_{2}, e_{2}) =
(a_{1}a_{2}, a_{1}e_{2}+a_{2}e_{1})$. For the reader's convenience, recall that if $I$ is an ideal of $A$ and $E'$ is a submodule of $E$ such that $IE \subseteq E'$, then $J := I \ltimes E'$ is an ideal of $R$; ideals of $R$ need not be of this form \cite[Example 2.5]{KM}. However, prime (resp., maximal) ideals of $R$ have the form  $p\ltimes E$, where $p$ is a prime (resp., maximal) ideal of $A$ \cite[Theorem 25.1(3)]{H}. Also an ideal of $R$ of the form $I\ltimes IE$, where $I$ is an ideal of $A$, is finitely generated if and only if $I$ is finitely generated \cite[p. 141]{G1}. A suitable background on commutative trivial ring extensions is \cite{G1,H}.

We first state a necessary condition for the inheritance of the fqp-property in a general context of trivial extensions.

\begin{proposition}\label{sec:4.1}
 Let $A$ be a  ring, $E$ an $A$-module, and $R:=A\ltimes E$ the trivial ring extension of $A$ by $E$. If $R$ is an fqp-ring, then so is $A$.
\end{proposition}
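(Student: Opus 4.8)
The plan is to show that any finitely generated ideal $\mathfrak{a}$ of $A$ is quasi-projective by relating it, via the ring surjection $\pi\colon R\to A$, $(a,e)\mapsto a$, to a quasi-projective ideal of $R$. The natural candidate is the ideal $J:=\mathfrak{a}\ltimes E$ of $R$, which is an ideal since $\mathfrak{a}E\subseteq E$; however this may fail to be finitely generated when $E$ is not finitely generated, so a better choice is $J:=\mathfrak{a}\ltimes\mathfrak{a}E$, which (as recalled in the excerpt, citing \cite[p.~141]{G1}) is finitely generated precisely because $\mathfrak{a}$ is. By hypothesis $J$ is quasi-projective over $R$; I would then apply Corollary~\ref{fuller} (the commutative Fuller-type base-change result) to the ring map $R\to A$: since $A\otimes_R J$ is finitely generated and quasi-projective over $A$, it suffices to identify $A\otimes_R J$ with $\mathfrak{a}$.

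The heart of the argument is therefore the computation $A\otimes_R(\mathfrak{a}\ltimes\mathfrak{a}E)\cong\mathfrak{a}$ as $A$-modules. Writing $A=R/(0\ltimes E)$, tensoring gives $A\otimes_R J\cong J/(0\ltimes E)J$. Now $(0\ltimes E)\cdot(\mathfrak{a}\ltimes\mathfrak{a}E)=0\ltimes \mathfrak{a}E$ (using the multiplication rule $(0,e)(a,e')=(0,ae)$), so $J/(0\ltimes E)J\cong(\mathfrak{a}\ltimes\mathfrak{a}E)/(0\ltimes\mathfrak{a}E)\cong\mathfrak{a}$, the last isomorphism being the restriction of $\pi$. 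Thus $A\otimes_R J\cong\mathfrak{a}$, and Corollary~\ref{fuller} yields that $\mathfrak{a}$ is finitely generated and quasi-projective over $A$. Since $\mathfrak{a}$ was an arbitrary finitely generated ideal of $A$, this shows $A$ is an fqp-ring.

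I expect the only delicate point to be bookkeeping with the non-finitely-generated case: one must resist the temptation to use $\mathfrak{a}\ltimes E$ and instead work with $\mathfrak{a}\ltimes\mathfrak{a}E$, and then verify carefully that $(0\ltimes E)(\mathfrak{a}\ltimes\mathfrak{a}E)=0\ltimes\mathfrak{a}E$ so that the quotient computation goes through cleanly. Everything else is a routine application of Corollary~\ref{fuller} together with the standard identity $R/\mathfrak{b}\otimes_R M\cong M/\mathfrak{b}M$. An alternative, avoiding the tensor computation, would be to note directly that $\mathfrak{a}\cong J/(0\ltimes\mathfrak{a}E)$ is a quotient of the quasi-projective $R$-module $J$ by a submodule invariant under $\operatorname{End}_R(J)$ — but invoking Corollary~\ref{fuller} is cleaner and already packaged in the paper.
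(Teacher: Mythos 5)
Your proof is correct, but it is not the route the paper takes in its displayed proof of Proposition~\ref{sec:4.1}. There the authors argue directly from the definition of quasi-projectivity: given a finitely generated ideal $I$ of $A$, a subideal $J$, and $f\in\Hom_A(I,I/J)$, they build the $R$-map $F\colon I\ltimes IE\to (I\ltimes IE)/(J\ltimes IE)$ sending $(x,e)$ to $\overline{(a,0)}$ where $f(x)=\overline{a}$, lift $F$ to an endomorphism $H$ of the quasi-projective ideal $I\ltimes IE$, and read off the desired lift $h$ of $f$ from the first coordinate of $H(x,0)$. Your argument instead passes through the base-change result (Corollary~\ref{fuller}) applied to the surjection $R\to A\cong R/(0\ltimes E)$, together with the computation $A\otimes_R(\mathfrak{a}\ltimes\mathfrak{a}E)\cong(\mathfrak{a}\ltimes\mathfrak{a}E)/(0\ltimes\mathfrak{a}E)\cong\mathfrak{a}$; this is precisely the alternative proof the authors themselves sketch in Remark~\ref{sec:4.1.1}, and your identification of $(0\ltimes E)(\mathfrak{a}\ltimes\mathfrak{a}E)=0\ltimes\mathfrak{a}E$ matches theirs. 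The trade-off: the paper's primary proof is elementary and self-contained (it only uses the definition of quasi-projectivity and the fact that $I\ltimes IE$ is a finitely generated ideal of $R$), while your route is shorter but leans on the machinery behind Corollary~\ref{fuller}, i.e.\ Theorem~\ref{zanardo} and Lemma~\ref{sg-comm}. Your attention to using $\mathfrak{a}\ltimes\mathfrak{a}E$ rather than $\mathfrak{a}\ltimes E$ to preserve finite generation is exactly the right precaution and is the same choice the paper makes.
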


\begin{proof}
Assume that $R$ is an fqp-ring. Let $I$ be a finitely generated ideal of $A$, $J$ a subideal of $I$, and $f\in \Hom_{A}(I,I/J)$. We wish to prove the existence of $h\in \Hom_{A}(I,I)$ such that $f(x)=\overline{h(x)}$ (mod $J$), for every $x\in I$. Clearly, $I\ltimes IE$ is a finitely generated ideal of $R$ and $J\ltimes IE$ a subideal of $I\ltimes IE$. Let $F : I\ltimes IE \longrightarrow \dfrac{I\ltimes IE}{J\ltimes IE}$ defined by $F(x,e)=\overline{(a,0)}$ (mod $J\ltimes IE$) where $a\in I$ with $f(x)=\overline{a}$ (mod $J$). It is easily seen that $F$ is a well-defined $R$-map. By assumption, $I\ltimes IE$ is quasi-projective. So there exists $H\in \Hom_{R}(I\ltimes IE,I\ltimes IE)$ such that $F(x,e)=\overline{H(x,e)}$ (mod $J\ltimes IE$), for every $(x,e)\in I\ltimes IE$. Now, for each $x\in I$, let $h(x)$ denote the first coordinate of $H(x,0)$; that is, $H(x,0)=(h(x),e_{x})$ for some $e_{x}\in IE$. One can easily check that $h : I\longrightarrow I$ is an $A$-map. Moreover, let $x\in I$ and $a\in I$ with $f(x)=\overline{a}$. We have $\overline{(a,0)}=F(x,0)=\overline{H(x,0)}=\overline{(h(x),e_{x})}$ (mod $J\ltimes IE$). It follows that $f(x)=\overline{a}=\overline{h(x)}$ (mod $J$), as desired.
\qed\end{proof}

\begin{remark}\label{sec:4.1.1}\rm
One can also prove Proposition~\ref{sec:4.1} via Corollary~\ref{fuller}. Indeed, assume $R:=A\ltimes E$ is an fqp-ring and let $I$ be a finitely generated ideal of $A$. Then $U_{R}:=I\ltimes IE$ is a finitely generated ideal of $R$ and hence quasi-projective. Now consider the ring homomorphism $\varphi : R\longrightarrow A$ defined by $\varphi(a,e)=a$. Clearly, the fact $A\cong \frac{R}{0\ltimes E}$ leads to the conclusion (to the effect that $A\otimes_{R} U\cong \frac{R}{0\ltimes E}\otimes_{R} I\ltimes IE\cong \frac{I\ltimes IE}{0\ltimes IE}\cong I$).
\end{remark}

Example~\ref{sec:4.5} below provides a counter-example for the converse of Proposition~\ref{sec:4.1}. The next two results establish necessary and sufficient conditions for the transfer of the fqp-property in special contexts of trivial extensions. We first examine the case of trivial extensions of integral domains.

\begin{theorem}\label{sec:4.2}
Let $A\subseteq B$ be an extension of domains and $K :=\qf(A)$. Let $R:=A \ltimes~B$ be the trivial ring extension of $A$ by $B$. Then the following statements are equivalent:\\
\1  $A$ is a Pr\"ufer domain with $K\subseteq B$;\\
\2  $R$ is a Pr\"ufer ring;\\
\3 $R$ is a Gaussian ring;\\
\4 $R$ is an fqp-ring.
\end{theorem}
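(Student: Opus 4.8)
The plan is to establish the cycle of implications $\1\Rightarrow\4\Rightarrow\3\Rightarrow\2\Rightarrow\1$. The implications $\4\Rightarrow\3\Rightarrow\2$ are immediate: the first is Theorem~\ref{sec:3.1}, and the second is the standard fact that a Gaussian ring is Pr\"ufer (used already in the proof of Theorem~\ref{sec:3.5}). So the real content lies in $\2\Rightarrow\1$ and $\1\Rightarrow\4$.

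For $\2\Rightarrow\1$: suppose $R=A\ltimes B$ is Pr\"ufer. First I would note that $0\ltimes B=\Nil(R)$ is exactly the set of zero-divisors of $R$ (since $(a,e)$ is regular iff $a\neq 0$, because any $b\in B$ embeds and $B$ is a domain), so the regular elements of $R$ are those $(a,e)$ with $a\neq 0$. The idea is to transfer invertibility: given a nonzero finitely generated ideal $I=(a_1,\dots,a_n)$ of $A$, the ideal $I\ltimes B$ of $R$ is finitely generated and regular, hence projective (invertible) in $R$; one then pushes this down along $R/(0\ltimes B)\cong A$ to conclude $I$ is invertible in $A$, so $A$ is Pr\"ufer. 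The condition $K\subseteq B$ should come from looking at a principal regular ideal $(a,0)R$ with $0\neq a\in A$: its being invertible in $R$ forces, for each $b\in B$, the element $(0,b)$ to lie in $(a,0)R=aA\ltimes aB$ — wait, that only gives $b\in aB$ — so instead I would test invertibility against the element $(0,b)$ using the inverse fractional ideal, obtaining $\frac{b}{a}\in B$ for all $a\in A\setminus\{0\}$ and $b\in A$, i.e. $K\subseteq B$. This dimensional/fractional-ideal bookkeeping inside $Q(R)=K\ltimes(B\otimes_A K)$ is the step I expect to require the most care.

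For $\1\Rightarrow\4$: assume $A$ is Pr\"ufer and $K\subseteq B$. Since the fqp-property is inherited by localizations (Lemma~\ref{sec:3.1.5}) and prime ideals of $R$ are of the form $p\ltimes B$, I would localize at each maximal ideal: $R_{p\ltimes B}\cong A_p\ltimes B_p$ where $A_p$ is a valuation domain still containing $\qf(A)=\qf(A_p)$ inside $B_p$, so it suffices to treat $R=V\ltimes B$ with $V$ a valuation domain and $K=\qf(V)\subseteq B$. Now let $I$ be a finitely generated ideal of $R$. If $I$ is regular, it is projective since $R$ is Pr\"ufer (this uses $\2$, already available via the chain $\1\Rightarrow\cdots$, or one argues directly that $V\ltimes B$ is Pr\"ufer). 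If $I$ is not regular, then $I\subseteq 0\ltimes B=\Nil(R)$, and since $(0\ltimes B)^2=0$, every cyclic subideal $(0,b)R$ satisfies $(0,b)R\cdot\mathfrak{m}\subseteq$ ... more precisely $I$ is a module over $R/(0\ltimes B)\cong V$, in fact over $V$, and $I$, being a finitely generated $V$-submodule of the $K$-vector space $B$ (here $K\subseteq B$ is what makes $B$ a $K$-vector space and hence a semisimple... no — a torsion-free divisible $V$-module), is a finite-dimensional $K$-subspace, hence $I\cong K^m$ as a $V$-module, hence $I\cong (R/(0\ltimes B))$-... The cleanest route: $\Ann_R(I)=0\ltimes B$ for such $I$ (each $b\in B$ kills $0\ltimes B$, and $I\subseteq 0\ltimes B$), so $I$ is a module over $\overline R:=R/(0\ltimes B)\cong V$; as a finitely generated torsion-free divisible, hence injective, $V$-module it is $K$-free, so $I\cong \overline R^{\,0}=0$ — which is absurd unless $I=0$. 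The gap here is that $I$ need not be divisible as a $V$-module even though $B$ is; so instead I would invoke Theorem~\ref{zanardo} directly and show $I$ is a direct summand of $(R/J)^n$: take $J=0\ltimes B$, and show the finitely generated $V$-module $I\subseteq B$ is a direct summand of $(V')^n$ for $V'=\overline R\cong V$ — but $I$ is torsion-free over the valuation domain $V$, hence free, hence $I\cong \overline R^{\,k}$, giving quasi-projectivity by Theorem~\ref{zanardo}. Assembling: in both the regular and non-regular cases $I$ is quasi-projective over $R$, so $R=V\ltimes B$, and therefore the original $R=A\ltimes B$, is an fqp-ring.

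The main obstacle I anticipate is the non-regular case of $\1\Rightarrow\4$: pinning down the $R$-module structure of a finitely generated non-regular ideal $I$ precisely enough to apply Theorem~\ref{zanardo} — in particular verifying that $\Ann_R(I)=0\ltimes B$ and that $I$ is then free (equivalently a direct summand of a free module) over $\overline R\cong V$, which is where the hypothesis $K\subseteq B$ does its essential work by forcing $B$, and hence any finitely generated torsion-free $V$-submodule of it, to behave well over the valuation domain $V$.
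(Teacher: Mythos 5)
Your cycle of implications is a reasonable architecture, and your final answer for the non-regular ideals in $\1\Rightarrow\4$ lands near the right idea, but the reduction you use to get there contains a genuine gap. You reduce $\1\Rightarrow\4$ to the case $R=V\ltimes B$ with $V$ a valuation domain by localizing at each maximal ideal $p\ltimes B$ and citing Lemma~\ref{sec:3.1.5}. That lemma only goes the wrong way for you: it says localizations of an fqp-ring are fqp, whereas you need that a ring all of whose localizations are fqp is itself fqp. The paper explicitly states at the end of Section~\ref{sec:3} that it could not prove or disprove that the fqp-property is local, and Lemma~\ref{Wisbauer} shows that quasi-projectivity of a finitely generated module is \emph{not} detected by localization alone --- one must also verify $(\End(M))_{\mathfrak m}\cong\End_{R_{\mathfrak m}}(M_{\mathfrak m})$ at every maximal ideal (this is exactly the extra work the paper does in proving ``arithmetical $\Rightarrow$ fqp'' in Theorem~\ref{sec:3.1}). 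So ``it suffices to treat $R=V\ltimes B$'' is unjustified. Fortunately the reduction is also unnecessary: for a nonzero finitely generated non-regular ideal one has $I\subseteq 0\ltimes B$, hence $\Ann_R(I)=0\ltimes B$ and $R/\Ann_R(I)\cong A$, and $I$ is a finitely generated torsion-free $A$-module (torsion-freeness is automatic because $B$ is a domain containing $A$), hence projective over the Pr\"ufer domain $A$; Lemma~\ref{sg-comm} then converts projectivity over $R/\Ann_R(I)$ into quasi-projectivity over $R$. This is precisely the paper's argument, carried out globally with ``projective'' in place of your ``free,'' and it is where the real content of the theorem sits: the paper proves only $\3\Rightarrow\4$ this way, taking $\4\Rightarrow\3$ from Theorem~\ref{sec:3.1}.

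The second divergence is $\2\Rightarrow\1$ (and more generally $\1\Leftrightarrow\2\Leftrightarrow\3$), which the paper does not prove at all but quotes from \cite[Theorem 2.1]{BKM}. Your sketch of it is avowedly incomplete: the ``fractional-ideal bookkeeping'' inside $Q(R)=K\ltimes(B\otimes_A K)$ that extracts $K\subseteq B$ is exactly where the work lies, and your first attempt at it visibly fails before you switch tactics without completing the second. If you intend to reprove this equivalence rather than cite it, that portion must be written out in full; as submitted it does not constitute a proof of $\2\Rightarrow\1$.
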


\begin{proof}
The implications \1  $\Longleftrightarrow$ \2  $\Longleftrightarrow$ \3 and \4 $\Longrightarrow$ \3 are handled by \cite[Theorem 2.1]{BKM} and Theorem~\ref{sec:3.1}, respectively. It remains to prove \3 $\Longrightarrow$ \4. Notice first that $(a,b)\in R$ is regular if and only if $a\not=0$. Assume that $R$ is Gaussian and let $I$ be a (non-zero) finitely generated ideal of $R$.  If $I$ contains a regular element, then $I$ is projective (since $R$ is a Pr\"ufer ring). If $I\subseteq 0\ltimes B$, then $I$ is a torsion free $A$-module and hence projective (since $A$ is a Pr\"ufer domain). But $A\cong \frac{R}{0\ltimes B}$ with $\rm{Ann}(I)=0\ltimes B$, hence $I$ is quasi-projective by Lemma~\ref{sg-comm}. Therefore $R$ is an fqp-ring.
\qed\end{proof}

Next we examine the case of trivial extensions of local rings by vector spaces over the residue fields.

\begin{theorem}\label{sec:4.3}
Let $(A,\frak{m})$ be a local ring and $E$ a nonzero $\dfrac{A}{\frak{m}}$-vector space. Let $R:=A\ltimes~E$ be the trivial ring
extension of $A$ by $E$. Then $R$ is an fqp-ring if and only if $\frak{m}^2=0$.
\end{theorem}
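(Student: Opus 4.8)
The plan is to prove both implications of the equivalence, exploiting the description of quasi-projective modules from Theorem~\ref{zanardo} together with the structural lemmas on incomparable principal ideals (Lemma~\ref{sec:3.1.6}) and the direct-sum criterion (Lemma~\ref{sec:3.1.4}). Since $R=A\ltimes E$ is local with maximal ideal $\frak{m}\ltimes E$, and $\frak{m}^2=0$ already forces $A$ to be quite degenerate, I expect the sufficiency direction to be the substantive one; the necessity direction should follow by locating an obstruction inside $R$ whenever $\frak{m}^2\neq 0$.

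For \textbf{necessity}, suppose $R$ is an fqp-ring; then by Proposition~\ref{sec:4.1} so is $A$, and $A$ is local. If $\frak{m}^2\neq 0$, I would aim to produce two incomparable principal ideals in $R$ whose sum fails $I^2=0$, contradicting Lemma~\ref{sec:3.1.6}. The natural candidates come from picking $a\in\frak{m}$ with $a\frak{m}\neq 0$: consider the elements $(a,0)$ and $(0,e)$ for a nonzero $e\in E$, or more robustly $(a,0)$ together with a suitable second element chosen so that the two generated principal ideals are incomparable. Once incomparability is arranged, Lemma~\ref{sec:3.1.6}\2 gives $(a,0)^2=(a^2,0)=0$, i.e. $a^2=0$ in $A$; running this over all $a\in\frak{m}$ with $a\frak{m}\neq0$, and combining with the polarization identity (in $R$, or in $A$ after noting $2$ may not be invertible — here one must be careful and instead argue directly that $ab=0$ for all $a,b\in\frak m$) should force $\frak m^2=0$. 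The delicate point is handling the failure of $2$ being a unit; I would circumvent it by applying Lemma~\ref{sec:3.1.6}\2 to the pair $(a,0),(b,0)$ when $(a)$ and $(b)$ are incomparable in $A$, which directly yields $ab=0$, and separately dispatching the comparable case.

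For \textbf{sufficiency}, assume $\frak m^2=0$. Then $\frak m$ is an $\frac{A}{\frak m}$-vector space, and every element of $\frak m\ltimes E$ squares to zero, so $R$ is already Gaussian (indeed $(\frak m\ltimes E)^2=0$). The goal is to show every finitely generated ideal $I$ of $R$ is quasi-projective. Since $R$ is local with $\frak m\ltimes E$ a square-zero ideal that is an $\frac{A}{\frak m}=R/(\frak m\ltimes E)$-vector space, $I$ is generated by elements of $R$; writing a minimal generating set, I would split into the case where $I$ contains a unit (so $I=R$, trivially quasi-projective) and the case $I\subseteq\frak m\ltimes E$. In the latter case $I$ is an $R/(\frak m\ltimes E)$-module, i.e. a finite-dimensional vector space over the field $k:=A/\frak m$, so $I\cong k^n=(R/(\frak m\ltimes E))^n$ for some $n$. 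The annihilator of any nonzero such $I$ is exactly $\frak m\ltimes E$ (every element of $\frak m\ltimes E$ kills $I$, and no unit does), so $I$ is free over $R/\Ann(I)$, hence quasi-projective by Lemma~\ref{sg-comm} (or directly by Theorem~\ref{zanardo}).

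The \textbf{main obstacle} I anticipate is the necessity direction: verifying that one can always exhibit a genuinely incomparable pair of principal ideals in $R$ when $\frak m^2\neq0$, and then correctly extracting ``$\frak m^2=0$'' from the conclusions of Lemma~\ref{sec:3.1.6} without assuming $2$ is invertible — the polarization trick $ab=\frac12((a+b)^2-a^2-b^2)$ is unavailable in characteristic $2$, so the argument must be organized entirely through pairs of incomparable principal ideals (giving $ab=0$ on the nose via part \2 applied to products, after checking $(a)\cap(b)=0$) supplemented by the easy comparable subcase. A secondary technical point is confirming, in the sufficiency direction, that $\Ann_R(I)=\frak m\ltimes E$ precisely — this uses that $E\neq 0$ only indirectly; what matters is that $I\subseteq \frak m\ltimes E$ and every generator is killed by $\frak m\ltimes E$ while units are not, which is immediate.
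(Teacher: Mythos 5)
Your proposal is correct, and its sufficiency direction coincides with the paper's (isolated there as Lemma~\ref{sec:4.4.1}: if $\frak{m}^2=0$ then $(\frak{m}\ltimes E)^2=0$, every nonzero proper finitely generated ideal of $R$ has annihilator exactly the maximal ideal and is therefore free over the residue field, hence quasi-projective by Lemma~\ref{sg-comm}). The necessity direction is where you genuinely diverge. The paper uses the incomparable pair $\big((a,0)\big)$, $\big((0,e)\big)$ only to conclude that $R$ is not a chained ring, and then invokes two pieces of machinery: Lemma~\ref{sec:4.4} (a local fqp-ring that is not chained satisfies $\Ze(R)=\Nil(R)$, whence $\Nil(R)=\frak{m}\ltimes E$) followed by the Singh--Mohammad dichotomy of Lemma~\ref{sec:3.4.0} ($\Nil(R)^2=0$ or $R$ is chained), giving $(\frak{m}\ltimes E)^2=0$ at once. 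You instead extract $\frak{m}^2=0$ directly from Lemma~\ref{sec:3.1.6}(2): for each $0\neq a\in\frak{m}$ the ideals $R(a,0)=aA\ltimes 0$ (note $aE=0$ since $E$ is an $A/\frak{m}$-vector space) and $R(0,e)=0\ltimes Ae$ are incomparable, so $a^2=0$; then $ab=0$ follows either from Lemma~\ref{sec:3.1.6}(2) again when $(a)$ and $(b)$ are incomparable (in $A$, which is an fqp-ring by Proposition~\ref{sec:4.1}, or equivalently via the pair $(a,0),(b,0)$ in $R$), or from $b=ca$ together with $a^2=0$ in the comparable case. Your explicit avoidance of polarization is exactly right, since $2$ need not be a unit. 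The trade-off: your route is more elementary and self-contained, needing only Lemma~\ref{sec:3.1.6} and the explicit pairs; the paper's route is shorter on the page because Lemma~\ref{sec:4.4} and Lemma~\ref{sec:3.4.0} are already in place (and Lemma~\ref{sec:4.4} is of independent interest). Both arguments are complete.
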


The proof lies on the next preliminary results.

\begin{lemma}\label{sec:4.4}
Let $R$ be a local fqp-ring which is not a chained ring. Then $\Ze(R)=\Nil(R)$.
\end{lemma}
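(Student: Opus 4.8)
\emph{Proposal.} The inclusion $\Nil(R)\subseteq\Ze(R)$ holds in any nonzero ring (a nonzero nilpotent of minimal nilpotency index is a zero-divisor, and $R\neq0$), so the content is to prove $\Ze(R)\subseteq\Nil(R)$, i.e.\ that every zero-divisor of $R$ is nilpotent. I would run this by contradiction.

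First I would fix the ambient picture. Since $R$ is a local fqp-ring which is not chained, Lemma~\ref{sec:3.4.0} gives $\Nil(R)^{2}=0$. By Theorem~\ref{sec:3.1}, $R$ is Gaussian, so its prime ideals are linearly ordered; consequently $\Ze(R)$ (always a union of primes) is a prime ideal, $\Nil(R)$ is the unique minimal prime, and $\overline{R}:=R/\Nil(R)$ is a local Gaussian \emph{domain}, hence a valuation domain. Non-chainedness also supplies nonzero $a,b$ with $(a)$ and $(b)$ incomparable, so Lemma~\ref{sec:3.1.6} yields $a^{2}=b^{2}=ab=0$, $(a)\cap(b)=0$, and $\Ann(a)=\Ann(b)$; in particular $a,b\in\Nil(R)$. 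Now suppose that some zero-divisor $z$ is not nilpotent, and fix $0\neq w$ with $zw=0$. The driving observation is that by Lemma~\ref{sec:3.1.6}, if $(z)$ were incomparable to some principal ideal $(c)$ then $z^{2}=0$; hence $(z)$ is comparable to \emph{every} principal ideal. Working in the domain $\overline{R}$ (where $\overline{z}\neq 0$) one then rules out $z\in(c)$ for every $c\in\Nil(R)$, so that $\Nil(R)\subseteq(z)$ and in fact $\Nil(R)=z\Nil(R)$; likewise the options $z\in(a)$ and $(a)=(z)$ are excluded, giving $a=\alpha z$, $b=\beta z$ with $\alpha,\beta\in\Nil(R)$, and $(\alpha),(\beta)$ still incomparable (were $(\alpha)\subseteq(\beta)$, then $(a)\subseteq(b)$). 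Iterating the divisibility produces, for every $n$, factorisations $a=z^{n}\alpha^{(n)}$, $b=z^{n}\beta^{(n)}$ with $\alpha^{(n)},\beta^{(n)}\in\Nil(R)$ nonzero and $(\alpha^{(n)}),(\beta^{(n)})$ incomparable, and multiplication by $z$ carries the direct sum $(\alpha^{(n+1)})\oplus(\beta^{(n+1)})$ onto $(\alpha^{(n)})\oplus(\beta^{(n)})$ with kernel controlled by $\Ann(z)\subseteq\Nil(R)$, where $0\neq w=\gamma z$ with $\gamma\in\Nil(R)$ and $\gamma z^{2}=0$.

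The crux — and the step I expect to demand the real work — is to force a contradiction out of this configuration: one must show that a non-nilpotent zero-divisor $z$ (which makes $(z)$ comparable to everything and $\Nil(R)=z\Nil(R)$) cannot coexist with incomparable principal ideals lodged inside $\Nil(R)$ whose annihilators coincide. The natural route is to feed the $2$-generated ideals $(\alpha^{(n)},\beta^{(n)})=(\alpha^{(n)})\oplus(\beta^{(n)})$ and their images under multiplication by $z$ into Lemma~\ref{sec:3.1.6} — using $(\alpha^{(n)})\cap(\beta^{(n)})=0$, $\Ann(\alpha^{(n)})=\Ann(\beta^{(n)})$ — and to split on whether $za=0$: in the case $za=0$ one gets $z^{2}\alpha=0$ while $z\alpha=a\neq0$, and dividing repeatedly by $z$ forces the strictly ascending chain $\Ann(z)\subsetneq\Ann(z^{2})\subsetneq\cdots$ inside $\Nil(R)$; in the case $za\neq0$ multiplication by $z$ is injective on each of $(a),(\alpha),(\alpha^{(1)}),\dots$, so these cyclic ideals become torsion-free over the valuation domain $\overline{R}$, which (combined with $\Nil(R)\subseteq(z)$) pins down their annihilators and lets one promote one of the incomparable pairs to a comparable one. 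Either branch contradicts the standing assumptions, and $\Ze(R)=\Nil(R)$ follows; I regard engineering this last contradiction cleanly as the main obstacle, everything before it being routine once the Gaussian/nilradical picture is set up.
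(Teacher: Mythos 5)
Your opening moves coincide with the paper's: from non-chainedness you get incomparable $(a),(b)$, Lemma~\ref{sec:3.1.6} shows a non-nilpotent $z$ must have $(z)$ comparable to every principal ideal, nilpotency of $a$ and $b$ rules out $z\in(a)$, and so $a=z\alpha$, $b=z\beta$ with $(\alpha),(\beta)$ still incomparable (the paper's $x=sx'$, $y=sy'$). But the proof stops there: you explicitly defer ``engineering this last contradiction'' and neither of the two branches you sketch closes it. In the branch $za=0$ you propose to derive a strictly ascending chain $\Ann(z)\subsetneq\Ann(z^{2})\subsetneq\cdots$; that is not a contradiction, since $R$ is not assumed Noetherian (indeed Example~\ref{sec:3.4.1} is a non-coherent fqp-ring). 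In the branch $za\neq0$ the appeal to torsion-freeness over $\overline{R}$ and to ``promoting an incomparable pair to a comparable one'' is too vague to check, and the iteration $a=z^{n}\alpha^{(n)}$ has no visible terminating mechanism. Also the side claim $\Nil(R)=z\Nil(R)$ is not justified: $c=rz$ with $c$ nilpotent forces $r\in\frak{m}$ but not $r\in\Nil(R)$.

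The missing idea is that the witness of zero-divisibility should be played against $\alpha$ and $\beta$ directly, with no iteration. Take $t\neq0$ with $zt=0$ and run the comparability trichotomy on $(t)$ versus $(\alpha)$: if they are incomparable, Lemma~\ref{sec:3.1.6}(3) gives $\Ann(\alpha)=\Ann(t)\ni z$, so $a=z\alpha=0$, absurd; if $(\alpha)\subseteq(t)$, say $\alpha=rt$, then $a=z\alpha=zrt=0$, absurd; if $(t)\subseteq(\alpha)$, then $(t)\cap(\beta)\subseteq(\alpha)\cap(\beta)=0$ forces $(t)$ and $(\beta)$ to be incomparable (both are nonzero), and the first argument applied to $\beta$ gives $b=z\beta=0$, absurd. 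This three-case check is the entire content of the paper's proof beyond the setup you already have; without it (or a working substitute) the argument is incomplete.
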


\begin{proof}
Let $s\in \Ze(R)$. Assume by way of contradiction that $s\notin\Nil(R)$. Let $x,y$ be two nonzero elements of $R$ such that $(x)$ and $(y)$ are incomparable (since $R$ is not a chained ring). Lemma~\ref{sec:3.1.6} forces $(x)$ and $(s)$ to be comparable and a fortiori $x\in (s)$. Likewise $y\in (s)$; say, $x=sx'$ and $y=sy'$ for some $x', y'\in R$. Necessarily, $(x')$ and $(y')$ are incomparable and hence $(x')\cap(y')=0$ (by the same lemma). Now let $0\neq t \in R$ such that $st=0$. Next let's consider three cases. If $(x')$ and $(t)$ are incomparable, then $\Ann(x')=\Ann(t)$ by Lemma~\ref{sec:3.1.6}(3). It follows that $x=sx'=0$, absurd. If $(t)\subseteq(x')$, then $(t)\cap(y')\subseteq (x')\cap(y')=0$. So $(y')$ and $(t)$ are incomparable, whence similar arguments yield $y=sy'=0$, absurd. If $(x')\subseteq(t)$; say, $x'=rt$ for some $r\in R$, then $x=sx'=str=0$, absurd. All possible cases end up with an absurdity, the desired contradiction. Therefore $s\in\Nil(R)$ and thus $\Ze(R)=\Nil(R)$.
\qed\end{proof}

\begin{lemma}\label{sec:4.4.1}
Let $(R,\frak{m})$ be a local ring. If $\frak{m}^{2}=0$, then $R$ is an fqp-ring.
\end{lemma}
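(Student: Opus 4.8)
The plan is to reduce the statement to the structural description of quasi-projective modules over a local ring given by Theorem~\ref{zanardo}: over a local ring $R$, a finitely generated module is quasi-projective precisely when it is isomorphic to $(R/J)^{n}$ for some ideal $J$ and integer $n\geq 0$. Thus it suffices to show that every finitely generated ideal $I$ of $R$ is of this form. I would first dispose of the trivial cases: $I=0$ is quasi-projective, and $I=R$ is free, hence quasi-projective; so I may assume $0\neq I\subsetneq R$, and since $R$ is local this means $I\subseteq\frak{m}$.

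The crux of the argument is the single observation that $\frak{m}^{2}=0$ forces $\frak{m}I\subseteq\frak{m}^{2}=0$, so $I$ is annihilated by $\frak{m}$ and therefore carries the structure of a module over the field $k:=R/\frak{m}$, i.e.\ a $k$-vector space. Being finitely generated over $R$, it is finite-dimensional over $k$, say of dimension $n$, whence $I\cong(R/\frak{m})^{n}$ as $R$-modules. By the local half of Theorem~\ref{zanardo} (with $J=\frak{m}$), $I$ is quasi-projective. As $I$ was an arbitrary finitely generated ideal, $R$ is an fqp-ring.

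Alternatively, one can bypass Theorem~\ref{zanardo} and argue directly: $R/\frak{m}$ is cyclic, hence quasi-projective by Lemma~\ref{sec:3.1.3}, and it is trivially $(R/\frak{m})$-projective, so Lemma~\ref{sec:3.1.4} gives that $(R/\frak{m})^{n}$ is quasi-projective; then transport along the isomorphism $I\cong(R/\frak{m})^{n}$. Either way, the only point that needs a line of verification is that $\Ann(I)=\frak{m}$ when one wants to phrase matters through Lemma~\ref{sg-comm}: this is immediate from $\frak{m}\subseteq\Ann(I)\subsetneq R$ together with the maximality of $\frak{m}$. There is essentially no obstacle in this proof; the content is entirely the elementary remark that $\frak{m}^{2}=0$ collapses every ideal contained in $\frak{m}$ to a vector space over the residue field, after which the preliminary results of Sections~2 and~3 finish the job.
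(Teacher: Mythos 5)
Your proof is correct and is essentially the paper's argument: the key point in both is that $\frak{m}^{2}=0$ forces $\frak{m}\subseteq\Ann(I)$ for any nonzero proper finitely generated ideal $I$, so $I$ is a finite-dimensional vector space (hence free) over the field $R/\Ann(I)=R/\frak{m}$, and quasi-projectivity follows from Lemma~\ref{sg-comm} (the paper) or equivalently from the local case of Theorem~\ref{zanardo} (your main phrasing). The two phrasings are interchangeable, so there is nothing further to add.
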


\begin{proof}
Let $I$ be a nonzero proper finitely generated ideal of $R$. Then $\Ann(I)=\frak{m}$. So $\dfrac{R}{\Ann(I)}\cong \dfrac{A}{\frak{m}}$. Hence $I$ is a free $\frac{R}{\Ann(I)}$-module, whence $I$ is quasi-projective by Lemma~\ref{sg-comm}. Consequently, $R$ is an fqp-ring.
\qed\end{proof}

\begin{proof}{\it of Theorem~\ref{sec:4.3}} Recall first that $R$ is local with maximal ideal $\frak{m}\ltimes~E$ as well as a total ring of quotients (i.e., $Q(R)=R$). Now suppose that $R$ is an fqp-ring. Without loss of generality, we may assume $A$ not to be a field. Notice that $R$ is not a chained ring since, for $e:=(1, 0, 0, \ldots)\in E$ and $0\neq a\in \frak{m}$, \big((a,0)\big) and \big((0,e)\big) are incomparable. Therefore Lemma~\ref{sec:4.4} yields $\frak{m}\ltimes~E=\Ze(R)=\Nil(R)$. By Lemma~\ref{sec:3.4.0}, $(\frak{m}\ltimes~E)^{2}=0$, hence $\frak{m}^{2}=0$, as desired.

Conversely, suppose $\frak{m}^2=0$. Then $(\frak{m}\ltimes E)^2=0$ which leads to the conclusion via Lemma~\ref{sec:4.4.1}, completing the proof of the theorem.
\qed\end{proof}

\cite[Theorem 3.1]{BKM} states that ``$R:=A\ltimes~E$ is Gaussian if and only if so is $A$" and ``$R$ is arithmetical if and only if $A := K$ is a field and $\dim_{K}E = 1$." Theorem~\ref{sec:4.3} generates new and original examples of rings with zero-divisors subject to Pr\"ufer conditions as shown below.

\begin{example}\label{sec:4.5}
 $R:=\dfrac{\Z}{8\Z}\ltimes~\dfrac{\Z}{2\Z}$ is a Gaussian total ring of quotients which is not an fqp-ring.
\end{example}

\begin{example}\label{sec:4.6}
$R:=\dfrac{\Z}{4\Z}\ltimes~\dfrac{\Z}{2\Z}$ is an fqp total ring of quotients which is not arithmetical.
\end{example}


\end{document}